\newtheoremstyle{th}
{4pt}{5pt}      
{\it}           
{}              
{\bf }          
{:}             
{.5em}          
{}              
\theoremstyle{th} 
\newtheorem{theorem}{Theorem}[section]
\newtheorem{definition}[theorem]{Definition}
\newtheorem{remark}[theorem]{Remark}
\newtheorem{corollary}[theorem]{Corollary}
\newtheorem{lemma}[theorem]{Lemma}
\newtheorem{proposition}[theorem]{Proposition}
\newtheorem{question}{Question}
\newtheoremstyle{ex}
{4pt}{5pt}      
{}              
{}              
{\bf }          
{:}             
{.5em}          
{}              
\theoremstyle{ex}
\newcommand{\Z}{\mathbb{Z}}
\newcommand{\F}{\mathbb{F}}
\newcommand{\M}{\mathbb{M}}
\newcommand{\abs}[1]{\ensuremath{\left|#1\right|}}
\newcommand{\Aut}{\ensuremath{\mathrm{Aut}}}
\newcommand{\tp}{\ensuremath{\mathrm{tp}}}
\date{}
\author{Larsen Louder, Chlo\'e Perin and Rizos Sklinos}
\title{Hyperbolic towers and independent generic sets in the theory of free groups}
\begin{document}

\maketitle

\begin{abstract} We use hyperbolic towers to answer some model theoretic questions around the generic type in the theory of free groups. 
We show that all the finitely generated models of this theory realize the generic type $p_0$, but that there is a finitely generated model which 
omits $p^{(2)}_0$. We exhibit a finitely generated model in which there are two maximal independent sets of realizations of the generic type which 
have different cardinalities. We also show that a free product of homogeneous groups is not necessarily homogeneous.
\end{abstract}

\begin{center}\textit{To Anand Pillay on the occasion of his 60th birthday.}\end{center}

\section{Introduction}\label{IT}

This paper is motivated by the works of Pillay \cite{PillayGenericity} and of the third named author \cite{SklinosGenericType}, 
which study the weight of the generic type in the free group.

Following the work of Sela \cite{Sel1}-\cite{Sel6} and of Kharlampovich and Myasnikov proving that non abelian 
free groups are elementarily equivalent, we denote by $T_{fg}$ their common first order theory. Sela also showed that $T_{fg}$ is stable \cite{SelaStability}.

Every stable theory admits a good model theoretic notion of independence, of which we give a brief account in Section \ref{Stable} for readers lacking a model theory background (the interested reader is referred to \cite{PillayStability}).

Poizat proved that $T_{fg}$ is connected in the sense of model theory, i.e. that there is a model of $T_{fg}$ which admits 
no proper definable subgroup of finite index. A consequence of stability is that connectedness is equivalent to saying that $T_{fg}$ admits a 
unique generic type over any set of parameters. We denote by $p_0$ the generic type over the empty set. 
Pillay gives a characterization of elements that realize $p_0$ in non abelian free groups. In fact, he shows more generally:

\begin{theorem} \cite[Fact 1.10(ii) and Theorem 2.1(ii)]{PillayGenericity}  \label{MaxIndAreBases}
A subset of a non abelian free group $\F$ is a maximal independent set of realizations of $p_0$ if and only if it is a basis of $\F$.
\end{theorem}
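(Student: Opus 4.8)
The plan is to prove the two implications separately, reducing each to standard ingredients: on the model-theoretic side, the stabilizer description of the generic type in a connected stable group; on the group-theoretic side, the elementarity $\F_n\preceq\F_{n+1}$ (due to Sela) and the homogeneity of finitely generated free groups (due to Ould-Houcine and to Perin--Sklinos). For the implication $(\Leftarrow)$ I would show that a new free generator over a model realises the generic type over that model, deduce from this both that every primitive element of $\F$ realises $p_0$ and that a basis of $\F$ is an independent set of realisations of $p_0$, and then observe by a short algebraicity argument that a basis is maximal. For $(\Rightarrow)$ I would use homogeneity, together with the fact that free groups of distinct finite ranks are not isomorphic, to force a maximal independent set of realisations of $p_0$ in $\F=\F_n$ to have exactly $n$ elements, and then to exhibit it as the image of a basis under an automorphism. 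I treat $\F=\F_n$ with $n$ finite; the infinite-rank case is the same, with the appropriate homogeneity statement.

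The core is the assertion that if $M\models T_{fg}$ and $M\le M*\langle e\rangle$ with $e$ a new basis element, then $\tp(e/M)$ is the generic type over $M$. As $M$ is a model, $\tp(e/M)$ is stationary. For $g\in M$, the endomorphism of $M*\langle e\rangle$ fixing a basis of $M$ pointwise and sending $e\mapsto ge$ is an automorphism (its inverse fixes the same basis and sends $e\mapsto g^{-1}e$), hence an elementary map fixing $M$ pointwise, so $\tp(ge/M)=\tp(e/M)$; that is, $\tp(e/M)$ is invariant under left multiplication by $M$. Since $T_{fg}$ is connected, the unique generic over $M$ is exactly the type with this invariance property --- the stabilizer of its global non-forking extension then contains $M$ and, being type-definable over $M$, must be all of the group --- so $\tp(e/M)$ is generic over $M$, and restriction gives $\tp(e/\emptyset)=p_0$. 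Applying this with $M$ a complementary free factor of a primitive element (a model whenever it has rank $\ge 2$, the rank-$2$ case being reduced to rank $3$ via $\F_2\preceq\F_3$) and using that $\Aut$ of a free group is transitive on its ordered bases, we obtain that every primitive element of a non-abelian free group realises $p_0$. To see that a basis is an \emph{independent} set of realisations of $p_0$, fix $\F_\infty$ with basis $(x_i)_{i\ge 1}$: for $k\ge 2$ the subgroup $\langle x_1,\dots,x_k\rangle$ is a model over which $x_{k+1}$ is a new free generator, so $x_{k+1}\models p_0$ and is independent over $\emptyset$ from $\{x_1,\dots,x_k\}$, while $x_2$ is independent over $\emptyset$ from $x_1$ because $\tp(x_1,x_2/\emptyset)=\tp(x_2,x_3/\emptyset)$ (some automorphism of $\F_\infty$ sends $(x_1,x_2)$ to $(x_2,x_3)$) and $x_3$ is independent over $\emptyset$ from $x_2$ by the case $k=2$. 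Thus $(x_i)_i$ is a Morley sequence in $p_0$, and since any basis of a rank-$n$ free group has the same type over $\emptyset$ as $(x_1,\dots,x_n)$ --- a group isomorphism is an isomorphism of structures, and $\F_n\preceq\F_\infty$ --- every basis of $\F$ is an independent set of realisations of $p_0$.

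To complete $(\Leftarrow)$: if $B=\{e_1,\dots,e_n\}$ is a basis and some $c\in\F_n$ realising $p_0$ made $B\cup\{c\}$ independent, then $c$ would be independent over $\emptyset$ from $B$, hence from $\F_n=\mathrm{dcl}(B)$, hence from itself, so $c\in\mathrm{acl}(\emptyset)$ --- impossible, since $p_0$ is non-algebraic. For $(\Rightarrow)$, let $B$ be a maximal independent set of realisations of $p_0$ in $\F_n$. Every finite subset of $B$ is an independent set of realisations of $p_0$, hence has the same type over $\emptyset$ as a suitable initial segment of a standard basis. If $|B|\ge n+1$, homogeneity of $\F_{n+1}$ gives $\beta\in\Aut(\F_{n+1})$ carrying $n+1$ elements of $B$ to the standard generators, whence $\beta(\F_n)=\F_{n+1}$, contradicting $\F_n\not\cong\F_{n+1}$; hence $|B|\le n$. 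If $|B|=m<n$, homogeneity of $\F_n$ gives $\alpha\in\Aut(\F_n)$ with $\alpha(B)=\{x_1,\dots,x_m\}$, and then $\alpha^{-1}(x_{m+1})$ realises $p_0$, lies outside $B$, and $B\cup\{\alpha^{-1}(x_{m+1})\}$ is independent, contradicting maximality. So $|B|=n$, and one last application of homogeneity gives $\alpha\in\Aut(\F_n)$ with $\alpha(\{x_1,\dots,x_n\})=B$, exhibiting $B$ as the image of a basis, hence a basis.

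The step I expect to be the real obstacle is the appeal, in the direction $(\Rightarrow)$, to the homogeneity of finitely generated free groups: without it I see no way to rule out an independent set of realisations of $p_0$ that is too large, or that fails to generate $\F$. The other points call for care rather than depth --- the stable-group background behind the stabilizer step (connectedness of $T_{fg}$, stationarity of types over models, and the equivalence between having full stabilizer and being generic in a connected stable group, for which I would lean on Section~\ref{Stable} and the standard references), together with the elementarity $\F_n\preceq\F_{n+1}$ needed for the low-rank base cases.
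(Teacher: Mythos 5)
The paper does not prove this theorem; it is quoted from Pillay's article, so there is no in-text argument to compare against directly. Your proof is, as far as I can see, correct for finite rank, but it departs substantially from what Pillay could have done in 2009. The $(\Leftarrow)$ direction via $M$-translation-invariance of $\tp(e/M)$ and the stabilizer characterization in a connected stable group is the classical Poizat argument and is fine; the step ``$\mathrm{Stab}$ is type-definable over $M$ and contains $M$, hence equals $G$'' is correct once you observe that each defining formula, being satisfied by all of $M$, is universally valid in $M$ and therefore in the monster by elementarity. The forking-with-itself argument for maximality is standard. The genuine divergence is in $(\Rightarrow)$: you rely on homogeneity of finitely generated non-abelian free groups (Perin--Sklinos, Ould Houcine), together with stationarity of $p_0$ and $\F_n \not\cong \F_{n+1}$, to pin down $|B|=n$ and conjugate $B$ onto a standard basis. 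Homogeneity is a deep theorem that postdates Pillay's paper; his original argument for Theorem 2.1 cannot and does not use it, proceeding instead by more hands-on manipulation of primitive elements and free factors. What your route buys is brevity and conceptual transparency at the cost of invoking heavy machinery; what Pillay's route buys is self-containment within the 2009 toolkit. One caveat worth flagging: your parenthetical ``the infinite-rank case is the same, with the appropriate homogeneity statement'' glosses over a real issue, since homogeneity for $\F_\omega$ is a much more delicate matter than for finite rank and the cited homogeneity theorems are stated only for finitely generated free groups. This does not affect the uses of Theorem \ref{MaxIndAreBases} in the present paper, which are all for $\F_n$ with $n$ finite, but the claim as written overstates what the homogeneity approach delivers.
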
 

An immediate consequence of this result is that in a non abelian free group $\F$, maximal independent sets of realizations of the 
generic type all have the same cardinality.

The notion of weight of a type $p$ can be intuitively thought of as a generalized exchange principle (see Section \ref{Stable}), 
and when it is finite, it bounds the ratio of the cardinality of two maximal independent sets of realizations of $p$. 
In particular, it is straightforward to see that if a type $p$ has weight $1$, then any two maximal 
independent sets of realizations of $p$ (in any model) have the same cardinality. 

In this light, the following result might look surprising:

\begin{theorem} \cite{PillayGenericity, SklinosGenericType}
The generic type $p_0$ of the theory of non abelian free groups has infinite weight.  
\end{theorem}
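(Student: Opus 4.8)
The plan is to prove that $w(p_0)=\infty$ by exhibiting, for each $n$, a configuration of pre-weight $n$. Recall from Section~\ref{Stable} that it is enough to find a (finitely generated) model $M$ of $T_{fg}$, a realization $a$ of $p_0$ in $M$, and a set $\{b_1,\dots,b_n\}\subseteq M$ that is independent over $\emptyset$ such that $a$ forks with each $b_i$ over $\emptyset$; letting $n$ grow then gives infinite weight. So fix $n\geq 2$.

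For the model I would take $G=\pi_1(\Sigma)$, where $\Sigma$ is the closed orientable surface of genus $n$, presented by gluing $n$ once-punctured tori $T_1,\dots,T_n$ to an $n$-holed sphere. By Sela's work $G$ is elementarily equivalent to $\F$, hence a finitely generated model of $T_{fg}$; moreover $G$ is a hyperbolic tower over each handle subgroup $\pi_1(T_i)\cong F(a_i,b_i)$ and over suitable embedded once-punctured-torus subsurfaces, so all of these free subgroups are elementary substructures of $G$. I would take the independent set to be $\{a_1,\dots,a_n\}$ (pairwise disjoint simple closed curves, one per handle), and take $a$ to be a simple closed curve $z$ having essential geometric intersection with each $a_i$, chosen to lie in an embedded once-punctured torus over which $G$ is still a tower and to be primitive there; then $z$ realizes $p_0$ by Theorem~\ref{MaxIndAreBases} applied inside that elementary free subgroup. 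The mechanism is then: (i) disjointness of $a_1,\dots,a_n$ makes the tuple \emph{maximally split}, which the Perin--Sklinos description of forking in $T_{fg}$ (valid in $G$ since $G\equiv\F$) reads as $\emptyset$-independence of $\{a_1,\dots,a_n\}$; and (ii) essential intersection of $z$ with $a_i$ means $z$ and $a_i$ jointly fill a subsurface, so no decomposition of $G$ can separate them and therefore $z$ forks with $a_i$ over $\emptyset$, for every $i$.

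The main obstacle is exactly these two forking verifications, i.e.\ translating curve combinatorics on $\Sigma$ into the forking relation of $T_{fg}$; the delicate half is $\emptyset$-independence of $\{a_1,\dots,a_n\}$, since homology cannot detect it in general (e.g.\ if one runs the argument with the homologically trivial boundary curves $[a_i,b_i]$), so one must genuinely use the characterization of non-forking through relative JSJ decompositions rather than any naive free splitting, all the while keeping $z$ a realization of $p_0$. An alternative, arguably more robust, route is to isolate a single configuration of pre-weight $2$ inside a free group carrying one surface floor, then stack $n$ such floors in a hyperbolic tower and pass to the natural \emph{diagonal} realization; there the work shifts to showing that forking does not leak between the floors of the tower. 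This is essentially the line taken by Pillay~\cite{PillayGenericity} and by the third named author~\cite{SklinosGenericType}.
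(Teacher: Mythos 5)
The paper does not prove this statement: it is quoted as known background from \cite{PillayGenericity} and \cite{SklinosGenericType}, so there is no in-paper proof to compare your attempt against. Those references argue model-theoretically; your strategy of producing preweight-$n$ configurations geometrically inside a genus-$n$ surface group is a genuinely different route, closer in spirit to the construction the present paper carries out in Section~\ref{MaxIndepSeqSec} than to the cited proofs.

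The difficulty is that the two forking verifications you flag as ``the main obstacle'' are in fact the entire content, and you do not carry them out. You assert (i) that disjointness of $a_1,\dots,a_n$ yields $\emptyset$-independence and (ii) that essential geometric intersection of $z$ with $a_i$ yields that $z$ forks with $a_i$, attributing both to ``the Perin--Sklinos description of forking.'' No such description is in this paper --- the cited \cite{PerinSklinosHomogeneity} concerns homogeneity, not forking --- and even granting a JSJ-type characterization of forking in $T_{fg}$ from later work, you would still need to check (a) that it governs tuples sitting inside a surface-group model $G$ rather than inside $\F$ itself, and (b) that it yields the two specific conclusions you want over the empty parameter set. Neither step is routine, and (i) in particular is exactly the kind of statement the paper's own construction works hard to avoid: the Remark closing Section~\ref{MaxIndepSeqSec} explicitly says that exhibiting finitely generated models directly witnessing infinite weight is open, and Proposition~\ref{Main} only achieves weight $\neq 1$. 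Your preweight formulation sidesteps the requirement that the $a_i$ realize $p_0$, which is a real simplification, but the underlying problem --- certifying $\emptyset$-independence of a large tuple inside such a model --- is the same obstruction the paper acknowledges. There is also a geometric consistency point to settle: you need a single simple closed curve $z$ that is primitive inside an embedded once-punctured torus over which $G$ is a tower, yet has essential intersection with each of $n$ disjoint curves $a_i$; for large $n$ this needs an actual construction, not just a plausibility claim. Finally, your closing guess that Pillay and Sklinos ``stack surface floors'' does not match those references; as presented, your proposal is a promising programme rather than a proof.
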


Intuitively one can explain this behavior of the generic type by noticing that 
two bases of a fixed non abelian free group have the same cardinality as a consequence of the universal property 
and not of some exchange principle.

It is thus natural to ask whether we can witness infinite weight in an explicit model in terms of independent 
sets of generic elements, or even whether we can witness that the generic type does not have weight $1$:

\begin{question} \label{MaxIndepGenericsQ} Is there a model of the theory of the free group in which one can find two 
maximal independent sets of realizations of the generic type with different cardinalities? 
\end{question}

Sections \ref{Stable} and \ref{Gmt} serve as introductory material for the notions we need from model theory and geometric group theory respectively. In 
Section \ref{Stable} we give formal definitions of independence and weight. 
In Section \ref{Gmt} we describe in detail the geometric notion 
which lies at the core of this paper, namely hyperbolic towers. 

In Section \ref{IsolatingResultsSec}, we examine $p_0$ with respect to the notion of isolation. We give the proof of an unpublished result of 
Pillay which shows that $p^{(2)}_0$ is not isolated in the theory axiomatized by $p_0$ (after adding a constant to the language of groups).We then classify the hyperbolic tower structures admitted by the
fundamental group $S_4$ of the connected sum of four projective
planes. We use this to deduce that the type $p_0$ is realized in every
finitely generated model of $T_{fg}$, but that $S_4$ omits
$p^{(2)}_0$, thus giving an explicit witness to Pillay's non isolation
result.  This also enables us to see that no type in $S(T_{fg})$
(apart from the trivial one) is isolated.

In Section \ref{MaxIndepSeqSec}, we answer Question \ref{MaxIndepGenericsQ} in the affirmative by exhibiting a suitable 
finitely generated model of $T_{fg}$. 

Finally we use this result in Section \ref{HomOfFreeProductsSec} to see that homogeneity is not preserved under taking 
free products, thus answering a question of Jaligot.

\paragraph{Acknowledgements.} We wish to thank Anand Pillay for allowing us to include his proof of Theorem \ref{P0squaredOverP0}, as well as 
Frank Wagner for some useful conversations. We are grateful to the referee for their thorough reading and numerous comments.

Finally, we would like to thank the organizers of the conference ``Recent Developments in Model Theory'' at Ol\'eron.

\section{Independence and Weight}\label{Stable}

In this section we give a quick description of the model theoretic notions we use. The exposition is biased towards our 
needs and by no means complete. 

We fix a stable first order theory $T$ and we work in a ``big'' 
saturated model $\mathbb{M}$, which is usually called the {\em monster model} (see \cite[p.218]{MarkerModelTheory}). 
As mentioned in the introduction, stable theories admit a good notion of independence, 
the prototypical examples being linear independence in vector spaces and algebraic independence in algebraically closed fields. 

In a more abstract setting Shelah gave the following definition of (forking) independence. 

\begin{definition}
Let $\phi(\bar{x},\bar{b})$ be a first order formula in $\mathbb{M}$ and $A\subset M$ (the underlying domain of $\mathbb{M}$). Then 
$\phi(\bar{x},\bar{b})$ forks over $A$ if there are infinitely many automorphisms $(f_i)_{i<\omega}\in Aut_A(\mathbb{M})$ and 
some $k<\omega$, such that the set $\{\phi(\bar{x},f_i(\bar{b})): i<\omega\}$ is $k$-inconsistent, i.e. every subset 
of cardinality $k$ is inconsistent.
\end{definition}

Recall that an $m$-type $p(\bar{x})$ over $A \subseteq M$ of the first order theory $T$ is a consistent (with $T$) set of formulas with parameters in 
$A$ with at most $m$ free variables. For example the type $\tp(\bar{a}/A)$ of a tuple $\bar{a} \in M$ is the set of formulas that $\bar{a}$ satisfies in $\mathbb{M}$ 
(in fact, saying that $\mathbb{M}$ is saturated is exactly saying that every $m$-type $p(\bar{x})$ over a set of parameters of cardinality 
strictly less than $\abs{M}$ is the type of an $m$-tuple $\bar{a} \in M$). 
The type of $\bar{a}$ over $A$ can equivalently be thought of as the collection of sets which are definable over $A$ and which contain $\bar{a}$. 

If $A\subseteq B$, we say that $\bar{a}$ is {\em independent} from $B$ over $A$ if there is no formula in $\tp(\bar{a}/B)$ which forks over $A$. 
In the opposite case we say that $\bar{a}$ {\em forks} with $B$ over $A$ (or $\bar{a}$ is not independent from $B$ over $A$). 
Heuristically, one can think of the latter case as expressing the fact that the type of $\bar{a}$ over 
$B$ contains much more information than the type of $\bar{a}$ over $A$ alone. 

Indeed, the definition implies that there is a formula with parameters in $B$ satisfied by $\bar{a}$ which forks over $A$. 
Thus the set $X$ defined by this formula contains $\bar{a}$, is definable over $B$, and admits an infinite sequence of 
$k$-wise disjoint translates by elements of $\Aut_A(\M)$ (here $\Aut_A(\M)$ denotes automorphisms of $\M$ fixing $A$ pointwise).

Consider now a set $Y$ which is definable over $A$ alone and contains $\bar{a}$: 
any automorphism in $\Aut_A(\M)$ necessarily fixes $Y$. Clearly $X$ can be assumed to be contained in $Y$, 
and thus so are all of its automorphic images (under $Aut_A(\M)$). Thus in some sense, $X$ is much smaller 
than any definable set $Y$ given by a formula in $tp(\bar{a}/A)$, and the type of $\bar{a}$ over $B$ ``locates'' 
$\bar{a}$ much more precisely than its type over $A$ alone.

A consequence of stability is the existence of non forking (independent) extensions. Let 
$A\subseteq B$ and $p(\bar{x})$ be a type over $A$. Then we say that $q(\bar{x}):=tp(\bar{a}/B)$ is a {\em non-forking extension} of $p(\bar{x})$, 
if $p(\bar{x})\subseteq q(\bar{x})$ and moreover $\bar{a}$ does not fork with $B$ over $A$. A type over $A$ is called {\em stationary} 
if for any $B\supseteq A$ it admits a unique non forking extension over $B$.

Let $C=\{\bar{c}_i : i\in I\}$ be a set of tuples, we say that {\em $C$ is an independent set over $A$} if for 
every $i\in I$, $\bar{c}_i$ is independent from $A \cup C\setminus\{\bar{c}_i\}$ over $A$. If $p$ is a type over $A$ 
which is stationary and  $(a_i)_{i<\kappa}, (b_i)_{i<\kappa}$ are both independent sets over $A$ of realizations of $p$, 
then $tp((a_i)_{i<\kappa}/A)=tp((b_i)_{i<\kappa}/A)$ (see \cite[Lemma 2.28, p.29]{PillayStability}). 
This allows us to denote by $p^{(\kappa)}$ the type of $\kappa$-independent realizations of $p$. 

For the purpose of assigning a dimension (with respect to forking independence) to a type, one might 
ask what is the cardinality of a maximal independent set of realizations of a type and whether any 
two such sets have the same cardinality. This naturally leads to the definition of weight. 

\begin{definition}
The preweight of a type $p(\bar{x}):= tp(\bar{a}/A)$, $prwt(p)$ is the
supremum of the set of cardinals $\kappa$ for which there exists $\{\bar{b}_i : i <\kappa\}$ an independent set over $A$, such that $\bar{a}$ forks with $\bar{b}_i$ over $A$ for all $i$.
The weight $wt(p)$ of a type $p$ is the supremum of 
$$\{prwt(q) \mid q \mathrm{\; a \; non \; forking \; extension \; of \; } p\}.$$
\end{definition}

The special case of weight $1$ can be thought of as an exchange principle: an element $a$ in the set of 
realizations of a weight $1$ type cannot fork with more than one element from an independent set. 

Thus, as in the case of the dimension theorem for vector spaces one can easily see that any two maximal 
independent sets of realizations of a weight $1$ type must have the same cardinality.

\section{Hyperbolic Towers}\label{Gmt}

In this section we define hyperbolic towers. Hyperbolic towers 
were first used by Sela \cite{Sel6} to describe the finitely generated models of the theory of non abelian free groups. 
They also appeared in \cite{PerinElementary} where the geometric structure of a group 
that elementarily embeds in a torsion-free hyperbolic group is characterized. 

In order to define hyperbolic towers we need to give a few preliminary definitions.

\subsection{Graphs of groups and graphs of spaces}

We first go briefly over the notion of graph of groups, for a more formal definition and further properties the reader is referred to \cite{SerreTrees}.

 A {\em graph of groups} consists of a graph $\Gamma$, together with two collections of groups $\{G_v\}_{v \in V(\Gamma)}$ 
 (the {\em vertex groups}) and $\{G_e\}_{e \in E(\Gamma)}$ (the {\em edge groups}), and a collection of embeddings $G_e \hookrightarrow G_v$ for 
 each pair $(e, v)$ where $e$ is an edge and $v$ is one of its endpoints. To a graph of groups $\Gamma$ is associated a group $G$ called 
 its {\em fundamental group} and denoted $\pi_1(\Gamma)$ (the use of algebraic topology terminology will be made clear below). 
 There is a canonical action of this group $G$ on a simplicial tree $T$ whose quotient $G \backslash T$ is isomorphic to $\Gamma$. 
 Conversely, to any action of a group $G$ on a simplicial tree $T$ without inversions, one can associate a graph of groups $\Gamma$ whose 
 fundamental group is isomorphic to $G$ and whose underlying graph is isomorphic to the quotient $G \backslash T$. An element or a subgroup in 
 $G$ which fixes a point in $T$ (or equivalently, which is 
contained in a conjugate in $G$ of one of the vertex groups $G_v$) is said to be {\em elliptic}.

A fundamental example is the special case where $\Gamma$ consists of two vertices $v$ and $w$ joined by a single edge $e$: then, the fundamental 
group of $\Gamma$ is the {\em amalgamated product} $G_v * _{G_e} G_w$. Graphs of groups can thus be thought of as a generalized version of 
amalgamated products.

The Van Kampen lemma gives a useful perspective on graphs of groups. It states that if a topological space $X$ can be written as a union 
$X_1 \cup X_2$ of two of its path connected subspaces, and if $Y = X_1  \cap X_2$ is also path connected, the (usual) fundamental group $\pi_1(X)$ of the space $X$ 
can be written as an amalgamated product $\pi_1(X_1) *_{\pi_1(Y)} \pi_1(X_2)$ where the group embeddings $\pi_1(Y) \hookrightarrow \pi_1(X_i)$ 
are induced by the topological embeddings $Y \hookrightarrow X_i$. 

Similarly, to a graph of groups $\Gamma$ we can associate a (not unique) {\em graph of spaces}: to each vertex $v \in V(\Gamma)$ 
(respectively edge $e \in E(\Gamma)$), we associate a (sufficiently nice) topological space $X_v$ (respectively $X_e$) 
such that $\pi_1(X_v) = G_v$ (respectively $\pi_1(X_e) = G_e$). To each pair $(e,v)$ of an edge and an 
endpoint is associated a topological embedding $f_{e,v} : X_e \hookrightarrow X_v$ which induces on 
fundamental groups the embedding $G_e \hookrightarrow G_v$. Then the fundamental group of the graph of groups $\Gamma$ is 
isomorphic to the fundamental group $\pi_1(X)$ of the space $X$ built by gluing the collection of spaces $\{X_v \mid v \in V(\Gamma)\}$ 
and $\{X_e \times [0,1] \mid e \in E(\Gamma)\}$ using the maps $f_{e,v}$. More precisely if $e$ is an edge joining $v$ to $w$, 
we identify each point $(x,0)$ of $X_e \times \{0\}$ to the image of $x$ in $X_v$ under $f_{e,v}$ and each point $(x, 1)$ in $X_e \times \{1\}$ to the image of $x$ in $X_w$ under $f_{e,w}$. 
Conversely, given a graph of spaces, there is a graph of groups associated to it. Figure \ref{GOGFig} illustrates this duality.

\begin{figure}[ht] 
\centering
\scalebox{1}{
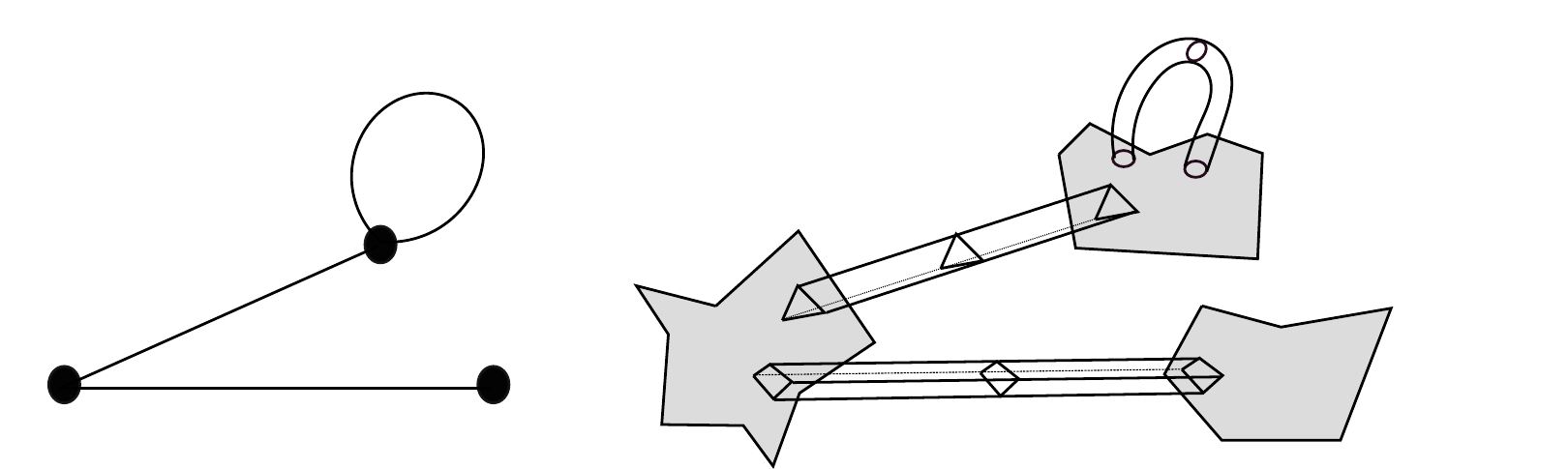}
\caption{A graph of groups and an associated graph of spaces.}
\label{GOGFig}
\end{figure}

\begin{definition} \emph{(Bass-Serre presentation)} Let $G$ be a group acting on a simplicial tree $T$ without inversions,
denote by $\Gamma$ the corresponding quotient graph of groups and by $p$ the quotient map $T \to \Gamma$. A Bass-Serre presentation for $\Gamma$ is a pair $(T^1, T^0)$ consisting of
\begin{itemize}
\item a subtree $T^1$ of $T$ which contains exactly one edge of $p^{-1}(e)$ for each edge $e$ of $\Gamma$;
\item a subtree $T^0$ of $T^1$ which is mapped injectively by $p$ onto a maximal subtree of $\Gamma$;
\end{itemize}
\end{definition}
The choice of terminology is justified by the fact that to such a pair $(T^1, T^0)$, we can associate a presentation of $G$ in terms of the subgroups $G_v$ for $v \in V(T^0)$, and elements of $G$ which send vertices of $T^1$ in $T^0$ (Bass-Serre generators).

\subsection{Surface groups}

We now recall some standard facts about surfaces and surface groups. Unless otherwise mentioned, all surfaces are assumed to be compact and connected.

The classification of surfaces gives that a surface without boundary $\Sigma$ (or {\em closed surface}) is characterized up to homeomorphism by its orientability and its Euler characteristic $\chi(\Sigma)$. It can be easily deduced from this that a surface with (possibly empty) boundary $\Sigma$ is characterized up to homeomorphism by its orientability, its Euler characteristic $\chi(\Sigma)$ and the number of its boundary components. The orientable closed surface of characteristic $2$ is the sphere, that of characteristic $0$ is the torus; the non orientable closed surface of characteristic $1$ is the projective plane, and that of characteristic $0$ is the Klein bottle. 

The {\em connected sum} $\Sigma$ of two surfaces $\Sigma_1$ and $\Sigma_2$ is the surface obtained by removing an open disk from each $\Sigma_i$, and gluing the boundary components thus obtained one to the other. We then have $\chi(\Sigma)= \chi(\Sigma_1)+\chi(\Sigma_2) - 2$. One then sees for example that the closed non orientable surface of characteristic $-1$ is the connected sum of three projective planes. Puncturing a surface (i.e. removing an open disk) decreases the Euler characteristic by $1$.

Let $\Sigma$ be a surface with boundary. Each connected component of $\partial \Sigma$ has cyclic fundamental group, and gives rise in $\pi_1(\Sigma)$ to a 
conjugacy class of cyclic subgroups, which we call {\em maximal boundary subgroups}. A {\em boundary subgroup} of $\pi_1(\Sigma)$ is a non trivial subgroup of a maximal boundary subgroup of $\pi_1(\Sigma)$.

Suppose $\Sigma$ has $r$ boundary components, and let $\gamma_1, \ldots, \gamma_r$ be generators of non conjugate maximal boundary subgroups. Then $\pi_1(\Sigma)$ admits a presentation of the form
$$ \langle a_1, \ldots, a_{2m}, \gamma_1, \ldots, \gamma_r \mid [a_1, a_2]\ldots[a_{2m-1},a_{2m}] = \gamma_1 \ldots \gamma_r \rangle$$ 
if it is orientable, and
$$ \langle d_1, \ldots, d_q, \gamma_1, \ldots, \gamma_r \mid d^2_1 \ldots d_q^2= \gamma_1 \ldots \gamma_r \rangle$$ 
if not. The Euler characteristic of the corresponding surface is given by $-(2m-2+r)$ in the orientable case and $-(q-2+r)$ in the non orientable case.

Note that in particular, the fundamental group $\pi_1(\Sigma)$ of a compact surface $\Sigma$ with non empty boundary $\partial \Sigma$ is a free group. However, we think of it as endowed with the peripheral structure given by its collection of maximal boundary subgroups.

Note also that the presentation given for the non orientable case is equivalent to
$$ \langle a_1, \ldots, a_{2h}, d_1, \ldots, d_p, \gamma_1, \ldots, \gamma_r \mid [a_1, a_2]\ldots[a_{2h-1},a_{2h}]d^2_1 \ldots d_p^2= \gamma_1 \ldots \gamma_r \rangle$$
for any $h, p$ such that $2h+p=q$. This comes from the fact that the $r$-punctured connected sum of $h$ tori and $p$ projective planes (for $p>0$) is homeomorphic to the $r$-punctured connected sum of $2h+p$ projective planes (since they are both non orientable, have the same Euler characteristic, and $r$ boundary components).

Let $S$ be the fundamental group of $\Sigma$ a surface with boundary, and let ${\cal C}$ be a set of $2$-sided disjoint simple closed curves on $\Sigma$. Let $\{T_c \mid c \in {\cal C}\}$ be a collection of disjoint open neighborhoods of the curves of ${\cal C}$ with homeomorphisms $c \times (-1,1)  \to T_c$ sending $c \times \{0\}$ onto $c$. Then $\Sigma$ can be seen as a graph of spaces, with edge spaces the curves in ${\cal C}$, and vertex spaces the connected components of $\Sigma - \bigcup_{c \in {\cal C}} T_c$. This gives a graph of groups decomposition for $S$, in which edge groups are infinite cyclic and boundary subgroups are elliptic. Such a decomposition is called the {\em decomposition of $S$ dual to ${\cal C}$}. The following lemma gives a useful converse, it is essentially Theorem III.2.6 in \cite{MorganShalen}.
\begin{lemma} \label{GOGSurfaceGroup}
Let $S$ be the fundamental group of a surface with boundary $\Sigma$. Suppose $S$ admits a graph of groups decomposition $\Gamma$ in which edge groups are cyclic and boundary subgroups are elliptic. Then there exists a set of disjoint simple closed curves on $\Sigma$ such that $\Gamma$ is the graph of groups decomposition dual to ${\cal C}$.
\end{lemma}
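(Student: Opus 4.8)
The plan is to realise the splitting encoded by $\Gamma$ geometrically, by cutting $\Sigma$ along a properly embedded one-submanifold obtained as the preimage of the midpoints of the edges under an equivariant map to the Bass--Serre tree of $\Gamma$; this is the mechanism behind \cite[Theorem III.2.6]{MorganShalen}, and the point is to run it in the simplicial setting and to keep control over $\partial\Sigma$. Let $T$ denote the Bass--Serre tree of $\Gamma$, so that $S=\pi_1(\Sigma)$ acts on $T$ simplicially and without inversions, with cyclic edge stabilisers and with every boundary subgroup of $S$ fixing a point of $T$. We may assume that $\Gamma$ has no edge whose group maps isomorphically onto the group of one of its endpoints (such an edge is trivially collapsible and does not affect the statement), and, more generally, that the action of $S$ on $T$ is minimal.

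Passing to the universal cover $\widetilde{\Sigma}$ --- a simply connected, hence planar, surface with boundary --- I would build an $S$-equivariant piecewise-linear map $f\colon\widetilde{\Sigma}\to T$ transverse to the set $M$ of midpoints of edges of $T$. Concretely: choose an $S$-invariant triangulation of $\widetilde{\Sigma}$ in which $\partial\widetilde{\Sigma}$ is a subcomplex, send each vertex to a point fixed by its stabiliser --- which is possible for interior vertices since vertex groups are elliptic, and, \emph{crucially for the control of $\partial\Sigma$}, for boundary vertices since boundary subgroups are elliptic by hypothesis --- and extend linearly over simplices. This choice lets us arrange $f(\partial\widetilde{\Sigma})\cap M=\varnothing$. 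Then $\Pi:=f^{-1}(M)$ is an $S$-invariant, properly embedded one-submanifold of $\widetilde{\Sigma}$ which is disjoint from $\partial\widetilde{\Sigma}$, hence a disjoint union of circles. The two local sides of the midpoint of an edge $e$ are canonically labelled by the two endpoints of $e$; since $S$ acts without inversions this labelling is equivariant and descends, so $\Pi$ is coherently two-sided and projects to a finite family $\mathcal{C}_0$ of disjoint two-sided simple closed curves on $\Sigma$. Because $\widetilde{\Sigma}$ is simply connected, any null-homotopic circle of $\Pi$ bounds a disk, and an innermost-disk surgery on $f$ deletes such a circle equivariantly; deleting similarly the redundant members of families of parallel circles produces an ``efficient'' invariant system descending to a multicurve $\mathcal{C}\subseteq\mathcal{C}_0$ with no inessential curves.

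Finally I must check that $\Gamma$ is exactly the graph-of-groups decomposition of $S$ dual to $\mathcal{C}$. Cutting $\widetilde{\Sigma}$ along $\Pi$ yields a tree $T_{\mathcal{C}}$ whose vertices are the complementary components of $\Pi$ and whose edges are the circles of $\Pi$; by construction its quotient graph of groups is the decomposition dual to $\mathcal{C}$, and $f$ induces an $S$-equivariant simplicial map $\psi\colon T_{\mathcal{C}}\to T$, since a complementary component maps into the open star of a single vertex of $T$, a circle of $\Pi$ maps to a single midpoint, and transversality preserves the incidences. Minimality of the action on $T$ forces $\psi$ to be onto; the essential point is that the efficiency of $\mathcal{C}$ prevents $\psi$ from folding at a vertex, so that $\psi$ is injective, hence an isomorphism, and the decomposition dual to $\mathcal{C}$ is $\Gamma$. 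The step I expect to be the main obstacle is exactly this last one: formulating the right efficiency (minimal-complexity) condition on $f$ and showing that it rules out folding --- which is the substantive content of \cite[Theorem III.2.6]{MorganShalen}, the measured lamination produced there being a multicurve precisely because $T$ is simplicial --- whereas the construction of $f$ and the surgeries are routine once the boundary is handled as above.
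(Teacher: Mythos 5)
Your proposal takes the same approach as the paper, which itself only sketches the argument (build an $S$-equivariant $f\colon\tilde\Sigma\to T$, take $f^{-1}$ of edge midpoints) and defers the substance to Morgan--Shalen~III.2.6; your version simply fills in more of the details and, usefully, pinpoints where the hypothesis that boundary subgroups are elliptic is used. Two small slips that do not affect the argument but should be fixed: an interior vertex of an $S$-invariant triangulation of $\tilde\Sigma$ has \emph{trivial} stabiliser (the covering action is free), so the justification ``since vertex groups are elliptic'' is a non-sequitur there -- any target works; and $\Pi=f^{-1}(M)$, being a properly embedded $1$-submanifold of the non-compact surface $\tilde\Sigma$ disjoint from $\partial\tilde\Sigma$, is a disjoint union of circles \emph{and lines}, not just circles -- indeed the essential components (those covering the curves of $\mathcal C$) must be lines, and it is precisely the circle components that your innermost-disk surgery removes.
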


The idea of the proof of this lemma is to build an $S$-equivariant map $f$ between a universal cover $\tilde{\Sigma}$ of $\Sigma$ and the tree $T$ associated to $\Gamma$, and to consider the preimage by $f$ of midpoints of edges of $T$. If $f$ is suitably chosen, this preimage will be the lift of a collection of simple closed curves ${\cal C}$ we are looking for.

\subsection{Hyperbolic floors and towers}

We will be interested in graphs of groups in which some of the vertex groups are {\em surface groups}, that is, fundamental groups of surfaces 
(all surfaces will be compact and with possibly non empty boundary). 
Equivalently, this means that the corresponding graph of spaces will have subspaces $X_v$ which are surfaces.

\begin{definition}
A graph of groups with surfaces is a graph of groups $\Gamma$ together with a subset 
$V_{S}$ of the set of vertices $V(\Gamma)$ of $\Gamma$, such that any vertex $v$ in $V_S$ satisfies:
\begin{itemize}
\item there exists a compact connected surface $\Sigma$ with non empty boundary, such that the vertex group $G_v$ is the fundamental group 
      $\pi_1(\Sigma)$ of $\Sigma$;
\item for each edge $e$, and $v$ an endpoint of $e$, the injection $G_e \hookrightarrow G_v$ maps $G_e$ onto a maximal 
      boundary subgroup of $\pi_1(\Sigma)$;
\item this induces a bijection between the set of edges adjacent to $v$ and the set of conjugacy classes in 
       $\pi_1(\Sigma)$ of maximal boundary subgroups of $\pi_1(\Sigma)$.
\end{itemize}
The vertices of $V_S$ are called surface type vertices. The surfaces associated to the vertices of $V_S$ are called the surfaces of $\Gamma$.
\end{definition}

Figure \ref{GOGWSFig} gives an example of a graph of groups with surfaces. Each surface type vertex $v$ of $\Gamma$ has been replaced 
by a picture of the corresponding surface with boundary $\Sigma_v$. Note how we represent pictorially the property that each edge group $G_e$ 
adjacent to a surface type vertex group $G_v$ embeds in a maximal boundary subgroup of $G_v$.

\begin{figure}[ht] 
\centering
\scalebox{0.7}{
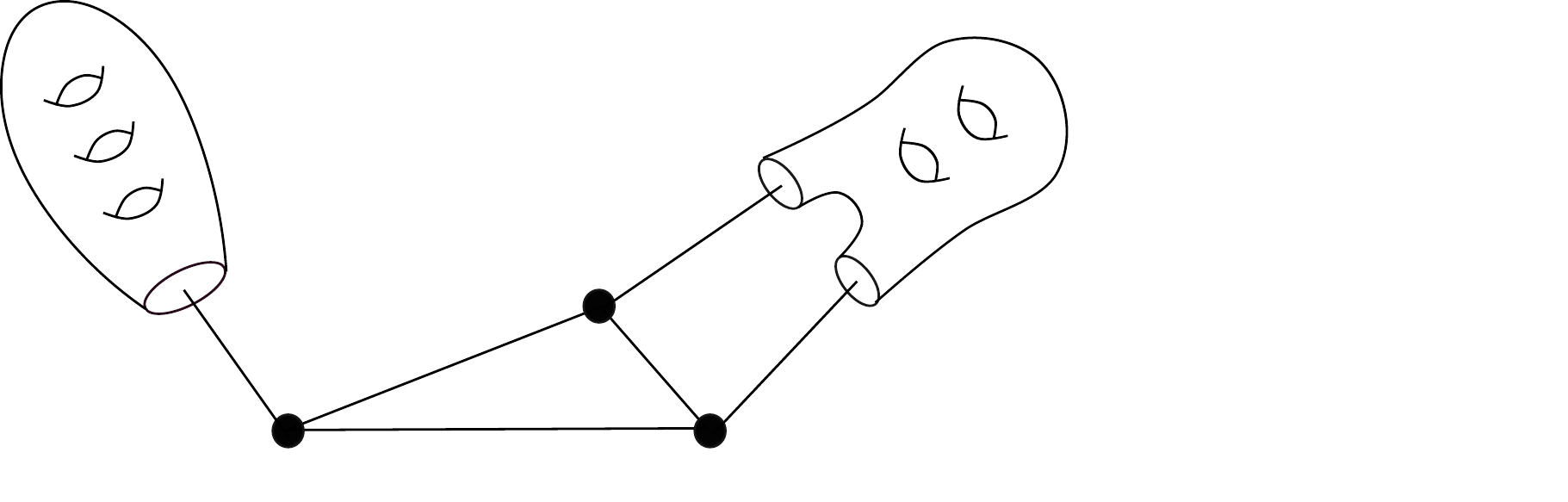}
\caption{A graph of groups with surfaces.}
\label{GOGWSFig}
\end{figure}

\begin{definition} \emph{((extended) hyperbolic floor)} \label{HypFloor} Consider a triple $(G, G', r)$ where $G$ is a group, 
$G'$ is a subgroup of $G$, and $r$ is a retraction from $G$ onto $G'$ (i.e. $r$ is a morphism $G \to G'$ which restricts to the identity on $G'$).

We say that $(G, G', r)$ is an extended hyperbolic floor if there exists a non trivial decomposition $\Gamma$ of 
$G$ as a graph of groups with surfaces, and a Bass-Serre presentation $(T^1, T^0)$ of $\Gamma$ such that:
\begin{itemize}
\item the surfaces of $\Gamma$ which are not once punctured tori have Euler characteristic at most $-2$; 
\item $G'$ is the free product of the stabilizers of the non surface type vertices of $T^0$;
\item every edge of $\Gamma$ joins a surface type vertex to a non surface type vertex (bipartism);
\item either the retraction $r$ sends surface type vertex groups of $\Gamma$ to non abelian images; 
or $G'$ is cyclic and there exists a retraction $r': G * \Z \to G' * \Z$ which sends surface type vertex groups of $\Gamma$ to non abelian images.
\end{itemize}
If the first alternative holds in this last condition, we say that $(G,G',r)$ is a hyperbolic floor.
\end{definition}

\begin{definition} \emph{((extended) hyperbolic tower)} \label{HypTower}
Let $G$ be a non cyclic group, let $H$ be a subgroup of $G$.
We say that $G$ is an (extended) hyperbolic tower over $H$ if there exists a finite
sequence $G=G^0 \geq G^1 \geq \ldots \geq G^m \geq H$ of subgroups of $G$ where $m \geq 0$ and:
\begin{itemize}
\item for each $k$ in $[0, m-1]$, there exists a  retraction $r_k:G^{k} \rightarrow G^{k+1}$
such that the triple $(G^k, G^{k+1}, r_k)$ is an (extended) hyperbolic floor, and $H$ is contained in one of the non surface type vertex group 
of the corresponding hyperbolic floor decomposition;
\item $G^m = H * F * S_1 * \ldots * S_p$ where $F$ is a (possibly trivial) free group, $p \geq 0$, and each $S_i$
is the fundamental group of a closed surface without boundary of Euler characteristic at most $-2$.
\end{itemize}
\end{definition}
Note that all the floors $(G^k, G^{k+1}, r_k)$ are in fact (non extended) hyperbolic floors except possibly for $(G^{m-1}, G^m, r_{m-1})$, 
and in this case $G^m$ is infinite cyclic so $H$ is cyclic or trivial. In particular, extended hyperbolic towers over non abelian subgroups are in fact hyperbolic towers.

\begin{figure}[!ht]
\begin{center}
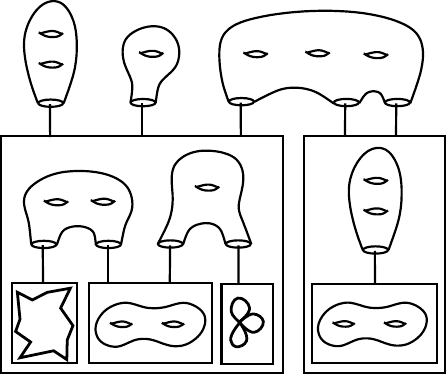
\caption{A hyperbolic tower over $H$.}
\label{HypTowerEx}
\end{center}
\end{figure}

To understand them better, let us consider hyperbolic towers in a graph of space perspective. If $G$ is an extended hyperbolic tower over $H$, 
it means we can build a space $X_G$ with fundamental group $G$ from a space $X_H$ with fundamental group $H$ as follows: start 
with the disjoint union $X^m$ of $X_H$ with closed surfaces $\Sigma_1, \ldots, \Sigma_p$ of Euler characteristic at most $-2$, together with a graph $X_F$. 
When $X^k$ is built, glue surfaces with boundary to $X^k$ along their boundary components (gluing each boundary component to non null 
homotopic curve of $X^k$) to obtain the space $X^{k-1}$. 

This is represented in Figure \ref{HypTowerEx}: here, $X^m$ is the union of the spaces in the four small boxes, and $X^{m-1}$ 
the union of those in the two big square boxes. Finally, $G$ is the fundamental group of the whole space 
(an edge between a surface and a box indicates that the corresponding boundary components is glued to a curve in the space contained in the box). 
In addition, one should think that each surface retracts onto the lower floor, in a non abelian way in the case of hyperbolic floors.

Though hyperbolic towers were introduced by Sela, their definition was slightly too restrictive, and some of the results concerning them were 
misstated in \cite{Sel6} and \cite{PerinElementary} (see \cite{PerinElementaryErratum}), which is why {\em extended} hyperbolic floors and 
towers had to be introduced.

Theorem 6 of \cite{Sel6} characterizes finitely generated models of the free group as hyperbolic towers. A corrected statement is 
\begin{theorem} \label{ElementaryFreeIFFHypTower}
Let $G$ be a finitely generated group. Then $G\models T_{fg}$ if and only if $G$ is an extended hyperbolic tower 
over the trivial subgroup.
\end{theorem}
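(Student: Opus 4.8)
The plan is to prove the two implications separately; the ``only if'' direction is the deep one, and I would ultimately cite \cite{Sel6} (in the corrected form of \cite{PerinElementaryErratum}) for its technical core, but here is how the argument is organised.

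\emph{The ``if'' direction.} Induct on the number $m$ of floors. When $m=0$ we have $G = F * S_1 * \cdots * S_p$ with $F$ free and each $S_i$ a closed surface group of Euler characteristic at most $-2$; such a $G$ --- non-cyclic by hypothesis --- satisfies $T_{fg}$ by Sela's results on surface groups and free products. For $m \geq 1$, look at the top floor $(G^0, G^1, r_0)$: the subgroup $G^1$ is an extended hyperbolic tower over $\{1\}$ with $m-1$ floors, so by induction $G^1 \models T_{fg}$. If $G^1$ is non-cyclic then, by the remark after Definition~\ref{HypTower}, $(G^0,G^1,r_0)$ is a genuine hyperbolic floor, and the key fact is that the inclusion $G^1 \hookrightarrow G^0$ is elementary (the floor case of the statement that a hyperbolic tower group is an elementary extension of its base, see \cite{Sel6, PerinElementary}), whence $G^0 \models T_{fg}$. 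If $G^1$ is cyclic, then $m=1$ and $(G^0,\Z,r_0)$ is an extended floor over $\Z$, and one argues directly that $G^0 \equiv \F$. In all cases the geometric mechanism is the same: over each surface-type vertex group one uses Merzlyakov's theorem to produce a \emph{formal} solution of the relevant system of equations after precomposing with a sufficiently large Dehn twist along the boundary curves, and the non-abelianity of $r$ (resp. of the auxiliary retraction $r' : G^0 * \Z \to \Z * \Z$) on surface vertex groups turns the formal solution into an honest witness; the bookkeeping on the surface side is organised via test sequences and Lemma~\ref{GOGSurfaceGroup}.

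\emph{The ``only if'' direction.} Let $G$ be finitely generated with $G \models T_{fg}$. Since $G \equiv \F$, $G$ embeds into an ultrapower of $\F$ and is therefore a limit group. The tower is built from the top down by repeatedly extracting a floor: using the cyclic JSJ decomposition of the limit group $G$, the Rips--Sela shortening argument applied to a suitable sequence of homomorphisms $G \to \F$, and the extra information that $G$ satisfies the same $\forall\exists$-sentences as $\F$, one shows that $G$ is either of the base form $F * S_1 * \cdots * S_p$ or admits a non-trivial graph-of-groups-with-surfaces decomposition together with a retraction onto the free product of its non-surface vertex groups making it an (extended) hyperbolic floor with surfaces of the correct type --- the $\forall\exists$-sentences being precisely what forces the equations defining this retraction to be solvable, and where being a model rather than merely a limit group is used. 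Having extracted a floor $(G, G^1, r)$, the subgroup $G^1$ is again finitely generated and, by the elementary-embedding statement used above, again a model of $T_{fg}$, so the construction recurses; it terminates because each floor strictly decreases a well-founded complexity built from the ranks and Euler characteristics of the pieces, leaving a base group $F * S_1 * \cdots * S_p$ at the bottom.

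\emph{Main obstacle.} In both directions the difficulty is entirely in the surface-vertex/retraction interaction: for ``if'' it is Merzlyakov's theorem and its extension to surface groups with boundary --- which is exactly where the corrections of \cite{PerinElementaryErratum} are needed, since without allowing once-punctured tori and the $G * \Z$ clause in Definition~\ref{HypFloor} the statement is simply false --- and for ``only if'' it is running the shortening argument so that it yields a genuine graph of groups with surfaces of the right Euler characteristics carrying an actual retraction. I would import these from \cite{Sel6, PerinElementary, PerinElementaryErratum} rather than reprove them.
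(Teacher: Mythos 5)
Your proposal is correct and in spirit identical to what the paper does: the paper gives no proof of this theorem at all, but states it as the corrected form of Theorem~6 of \cite{Sel6}, with Remark~\ref{ChangeInDefHypTower} pointing to \cite{PerinElementaryErratum} and \cite{PerinElementaryCorrected} for the low-complexity corrections and to \cite{SelaPrivate} for the ``if'' direction in the exceptional cases. Your sketch is a faithful high-level account of Sela's argument and you explicitly import its technical core from the same sources, so there is no genuine divergence --- you have simply unwound the citation one level further than the paper does, and your unwinding (Merzlyakov plus test sequences for ``if'', limit-group structure plus JSJ plus shortening for ``only if'', with the extended-floor clauses of Definition~\ref{HypFloor} handling the once-punctured torus and cyclic-base degeneracies) matches the actual proof in \cite{Sel6, PerinElementary, PerinElementaryErratum}.
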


\begin{remark} 
\label{ChangeInDefHypTower} One of the key steps in the proof of the ``only if'' direction of this result is to prove that from a map $G \to G$ preserving some of the structure of the group $G$, one can build a retraction $r: G \to G'$ to a proper subgroup which makes $(G, G', r)$ into a hyperbolic floor. 

However, in a few low complexity cases for $G$, this does not hold and the best we can get is a retraction making $(G, G', r)$ into an {\em extended} hyperbolic floor (see \cite{PerinElementaryErratum}).

This key step was made explicit in \cite{PerinElementary}, where it is stated as Proposition 5.11 and given a more detailed proof. However, in this paper too these counterexamples were overlooked. A corrected version of the proof this proposition can be found in \cite{PerinElementaryCorrected}. 
The ``if'' direction of \ref{ElementaryFreeIFFHypTower} in these the exceptional cases is not dealt with in \cite{Sel6}, but the proof can be extended in a straightforward way according to Sela \cite{SelaPrivate}.
\end{remark}

Sela also uses the notion of hyperbolic towers in \cite{Sel7} to classify torsion-free hyperbolic groups up to 
elementary equivalence. He shows that to every torsion free hyperbolic group $\Gamma$ can 
be associated a subgroup $C(\Gamma)$ which he calls its elementary core, over which $\Gamma$ admits a 
structure of hyperbolic tower, which is well defined up to isomorphism, and such that two torsion free 
hyperbolic groups are elementarily equivalent if and only if they have isomorphic elementary cores. 
He also shows that if $\Gamma$ is not elementarily equivalent to the free group, then $C(\Gamma)$ is an elementary subgroup of $\Gamma$. According to Sela \cite{SelaPrivate}, the proof of this last result can be adapted to give in fact 
\begin{theorem} \label{SubtowersAreElementary}
Suppose $\Gamma$ is a torsion free hyperbolic group which admits a structure of hyperbolic tower over a non abelian subgroup $H$. Then $H$ is an elementary subgroup of $G$.
\end{theorem}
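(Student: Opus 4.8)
The plan is to follow Sela's proof that the elementary core embeds elementarily \cite{Sel7}, reorganised so that it applies to an arbitrary hyperbolic tower structure. Write the tower as $\Gamma = G^0 \geq G^1 \geq \ldots \geq G^m \geq H$. By induction on $m$ and transitivity of elementary embeddings, the statement reduces to two base facts. Fact (A): if $(G,G',r)$ is a hyperbolic floor with $G$ torsion-free hyperbolic and $G'$ non-abelian, then $G' \preceq G$. Fact (B): if $K$ is torsion-free hyperbolic and non-abelian, then $K \preceq K * F * S_1 * \ldots * S_p$ whenever $F$ is a finitely generated free group and the $S_i$ are fundamental groups of closed surfaces of Euler characteristic at most $-2$. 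First I would observe that the non-abelian hypothesis propagates down the tower: since $H \leq G^m \leq \ldots \leq G^0$, every $G^k$ is non-abelian, so in particular $G^m$ is not cyclic and, by the Remark following Definition~\ref{HypTower}, every floor $(G^k,G^{k+1},r_k)$ is a genuine (non-extended) hyperbolic floor. I would also record that each $G^k$ is itself torsion-free hyperbolic, being a free product of the non-surface vertex groups of a splitting of $G^k$ over cyclic subgroups, which are quasiconvex and hence hyperbolic. Granting (A) and (B), the floors yield $G^{k+1} \preceq G^k$ for $0 \leq k \leq m-1$, the bottom level yields $H \preceq G^m$, and transitivity gives $H \preceq \Gamma$.

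The heart of the argument is Fact (A), and for it I would verify the Tarski--Vaught criterion for $G' \subseteq G$: given a formula $\phi(x,\bar b)$ with parameters $\bar b$ a tuple in $G'$ and some $g \in G$ with $G \models \phi(g,\bar b)$, one must produce $a \in G'$ with $G \models \phi(a,\bar b)$. The retraction $r$ handles the positive, equational content — the image under $r$ of a solution over $G'$ of a system of equations is again a solution over $G'$ — while the surface-type vertices of the floor decomposition, together with the fact that $r$ sends their vertex groups onto non-abelian subgroups of $G'$, are precisely what absorbs the universally quantified and inequational content, via the generalised Merzlyakov-type theorem (existence of formal solutions over hyperbolic floors) and the attendant shortening and test-sequence machinery of \cite{Sel6,Sel7}. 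The point of the adaptation is that the only role played in that argument by the hypothesis that $\Gamma$ is not elementarily equivalent to $\F$ is to guarantee that $C(\Gamma)$ is non-degenerate — in fact non-abelian — so that the machinery has purchase; the hypothesis that $H$, hence $G'$, is non-abelian supplies exactly this, so the argument transfers with no essential change.

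Fact (B) is the base case $m = 0$ of the theorem itself, and it is one of the foundational results of the theory of hyperbolic towers (see \cite{PerinElementary,Sel6}); its proof is a Tarski--Vaught argument of the same flavour, using the non-abelianness of $K$ in the same essential way (one exploits that the copies of $\Z$ inside $F$ may be permuted and that $\pi_1$ of a closed surface of Euler characteristic at most $-2$ retracts onto a non-abelian subgroup of $K$). In the plan I would simply invoke it.

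The step I expect to be the main obstacle is the transcription underlying Fact (A): one must go carefully through Sela's proof that $C(\Gamma) \preceq \Gamma$ and check that the hypothesis ``$\Gamma$ not elementarily equivalent to $\F$'' can be replaced, everywhere it is used, by ``the base group of the floor is non-abelian'' — in particular at the step where a self-map of $G$ respecting the relevant structure is promoted to a retraction onto a proper subgroup giving a genuine hyperbolic floor (compare Remark~\ref{ChangeInDefHypTower}). A related technical nuisance is keeping track of the low-complexity exceptional configurations of \cite{PerinElementaryErratum} and verifying that, under the standing non-abelian hypothesis, they never force one into the merely \emph{extended} regime, so that the inductive scheme above is not disrupted.
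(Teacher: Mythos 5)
The paper does not actually contain a proof of this theorem: it is attributed to a private communication of Sela, with the remark that his published argument (that $C(\Gamma) \preceq \Gamma$ when $\Gamma$ is not elementarily equivalent to a free group) can be adapted. Your proposal is a plausible outline of the shape such an adaptation would take --- a floor-by-floor induction via transitivity of elementary embedding, each floor verified by a Tarski--Vaught argument in which the retraction handles the positive content and the generalized Merzlyakov / test-sequence machinery of \cite{Sel6,Sel7} handles the universal and inequational content --- and this agrees in spirit with the paper's citation. Like the paper, though, you defer the essential work to Sela's framework: your Facts (A) and (B) are each as deep as the theorem itself and neither is verified, which puts you exactly on the paper's footing.

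The one thing worth correcting in your write-up is the treatment of Fact (B). It is not available off the shelf from \cite{PerinElementary,Sel6}: Theorem 1.2 of \cite{PerinElementary} is the \emph{converse} implication (an elementary subgroup yields a tower structure), and \cite{Sel6} concerns towers over the trivial subgroup, i.e.\ the elementarily free case. For an arbitrary non-abelian torsion-free hyperbolic $K$, the assertion $K \preceq K * F * S_1 * \cdots * S_p$ is precisely the $m = 0$ instance of the theorem you are proving, and it needs the same Merzlyakov-type input as Fact (A). Presenting (A) as the ``main obstacle'' while invoking (B) as a citable foundational result is therefore misleading; the two facts carry the same weight, and both are exactly what Sela's private communication asserts can be extracted from his methods.
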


The converse of this result is given by Theorem 1.2 of \cite{PerinElementary} so we get:
\begin{theorem} \label{ElementarySubgroups} Let $\Gamma$ be a torsion free hyperbolic group, and let $H$ be a non abelian subgroup of $\Gamma$.
Then $H$ is an elementary subgroup of $ \Gamma$ if and only if $\Gamma$ admits a structure of hyperbolic tower over $H$. 
\end{theorem}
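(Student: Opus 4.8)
The plan is to split into the two implications; each is, in essence, a previously established theorem, so the proof is short and the real work lies in quoting the right statements and checking that the hypotheses match up.

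For the ``if'' direction, suppose $\Gamma$ admits a hyperbolic tower structure over the non abelian subgroup $H$. Then Theorem \ref{SubtowersAreElementary} applies directly and yields that $H$ is an elementary subgroup of $\Gamma$. Nothing further is needed here; the content is internal to Theorem \ref{SubtowersAreElementary}, whose proof (Sela, adapted according to \cite{SelaPrivate}) is by induction on the height of the tower: one first checks that a single hyperbolic floor inclusion $G^{k+1}\hookrightarrow G^k$ is elementary --- using the retraction $r_k$ and the surface type vertex groups to manufacture, for each formula, the requisite formal solutions and test sequences --- and then composes these elementary inclusions down the tower, treating the free product base case separately.

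For the ``only if'' direction, suppose $H$ is a non abelian elementary subgroup of $\Gamma$; we must produce a hyperbolic tower structure of $\Gamma$ over $H$, which is precisely Theorem 1.2 of \cite{PerinElementary} (incorporating the corrections of \cite{PerinElementaryErratum} and \cite{PerinElementaryCorrected}). I would argue by induction. From the elementary --- in particular existentially closed --- inclusion $H\hookrightarrow\Gamma$ one extracts a preretraction $\Gamma\to H$ relative to an appropriate JSJ-type decomposition of $\Gamma$ over $H$: an elementary embedding forces enough solutions of equations over $H$ to be recoverable inside $H$. Proposition 5.11 of \cite{PerinElementary}, corrected in \cite{PerinElementaryCorrected}, then upgrades a non-degenerate preretraction to an honest retraction $r_0:\Gamma\to\Gamma_1$ onto a proper subgroup $\Gamma_1\supseteq H$ with $(\Gamma,\Gamma_1,r_0)$ an (extended) hyperbolic floor. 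By the ``if'' direction applied to this one-floor tower, $\Gamma_1\preceq\Gamma$, and since $H\leq\Gamma_1\preceq\Gamma$ and $H\preceq\Gamma$ we get $H\preceq\Gamma_1$. Iterating produces a strictly descending chain $\Gamma=\Gamma_0>\Gamma_1>\Gamma_2>\cdots$ of hyperbolic floors, each containing $H$ and elementarily embedded in $\Gamma$; an accessibility/complexity bound (controlling the Euler characteristics of the surfaces involved and using torsion-freeness and hyperbolicity) forces the chain to terminate at some $\Gamma_m$ admitting no non-degenerate preretraction onto a proper subgroup containing $H$. Analysing this rigid case together with $H\preceq\Gamma_m$ forces $\Gamma_m=H*F*S_1*\cdots*S_p$ with $F$ free and the $S_i$ closed surface groups of Euler characteristic at most $-2$ --- the base case of Definition \ref{HypTower}. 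Assembling the floors $(\Gamma_k,\Gamma_{k+1},r_k)$ over this base yields the required tower.

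The entire difficulty is concentrated in the ``only if'' direction, and the single hardest point is not the inductive scaffolding but the construction and promotion of the preretraction, i.e. Proposition 5.11 of \cite{PerinElementary} and its corrected form \cite{PerinElementaryCorrected}: it is exactly in the low-complexity exceptional configurations (once-punctured tori, cyclic lower floor) that one is forced to pass to \emph{extended} hyperbolic floors, and keeping that bookkeeping correct --- the oversight flagged in Remark \ref{ChangeInDefHypTower} --- is the delicate part. The ``if'' direction and the termination argument are comparatively routine once Theorem \ref{SubtowersAreElementary} and the accessibility bounds are available.
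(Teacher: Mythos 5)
Your proof is correct and takes the same route as the paper: the paper establishes Theorem~\ref{ElementarySubgroups} precisely by combining Theorem~\ref{SubtowersAreElementary} (the ``if'' direction, due to Sela) with Theorem 1.2 of \cite{PerinElementary} (the ``only if'' direction), exactly as you do. Your further sketches of the internal proofs of those two cited results are accurate supplementary commentary consistent with Remark~\ref{ChangeInDefHypTower}, but the paper itself stops at citing them.
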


The following result is Theorem 7.1 of \cite{PerinSklinosHomogeneity}. Before stating it we recall that 
the connectedness of $T_{fg}$ implies that $p_0$ is stationary (as any non forking extension of $p_0$ is also a generic type). Thus, following our discussion in Section \ref{Stable}, we denote by $p_0^{(k)}$ the type of $k$-independent realizations of $p_0$.

\begin{theorem} \label{ElementsOfPrimitiveTypeIFF} Let $G$ be a non abelian finitely generated group. 
Let $(u_1, \ldots, u_k)$ be a $k$-tuple of elements of $G$ for $k  \geq 1$. 

Then $(u_1, \ldots, u_k)$ realizes $p^{(k)}_0$ if and only if $H_u= \langle u_1, \ldots, u_k \rangle$ is 
free of rank $k$ and $G$ admits a structure of extended hyperbolic tower over $H_u$.
\end{theorem}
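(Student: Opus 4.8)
The plan is to reduce the statement to the assertion that a $k$-tuple realizes $p^{(k)}_0$ in $G$ exactly when it freely generates a rank-$k$ elementary subgroup, and then feed this into the dictionary between elementary subgroups and hyperbolic tower structures provided by Theorems \ref{SubtowersAreElementary} and \ref{ElementarySubgroups}. First I would note that the statement has content only when $G\models T_{fg}$: a complete type over $\emptyset$ contains every sentence of $T_{fg}$, so the hypothesis ``$(u_1,\dots,u_k)$ realizes $p^{(k)}_0$'' already forces $G\models T_{fg}$, hence (Theorem \ref{ElementaryFreeIFFHypTower}) $G$ is an extended hyperbolic tower over $\{1\}$ and in particular torsion-free hyperbolic; and if conversely $G$ is an extended hyperbolic tower over a free group $H_u$, then $G$ is again torsion-free hyperbolic. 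Thus in both directions $G$ is torsion-free hyperbolic and the quoted theorems apply. I would then treat the case $k\geq 2$, where $H_u$ is non-abelian so that ``extended hyperbolic tower over $H_u$'' and ``hyperbolic tower over $H_u$'' coincide, and deduce the case $k=1$ from it.

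\emph{Case $k\geq 2$.} The crux is the equivalence: $(u_1,\dots,u_k)$ realizes $p^{(k)}_0$ in $G$ if and only if $H_u$ is free with basis $(u_1,\dots,u_k)$ and $H_u\preceq G$. Granting it, Theorem \ref{ElementarySubgroups} converts ``$H_u\preceq G$'' into ``$G$ is a (hence an extended) hyperbolic tower over $H_u$'', finishing the case. For the forward implication: a basis $(e_1,\dots,e_k)$ of $\F_k$ is, by Theorem \ref{MaxIndAreBases}, a maximal independent set of realizations of $p_0$, so it realizes $p^{(k)}_0$; since $w(e_1,\dots,e_k)\neq 1$ for every non-trivial reduced word $w$, the formula $w(\bar x)\neq 1$ lies in $p^{(k)}_0$, whence $w(u_1,\dots,u_k)\neq 1$ for all such $w$ and $H_u$ is free with basis $(u_1,\dots,u_k)$; then $\tp^{H_u}(u_1,\dots,u_k)=p^{(k)}_0=\tp^G(u_1,\dots,u_k)$, and since every element of $H_u$ is a word in the $u_i$ this equality of types of the generating tuple propagates to all tuples of $H_u$, giving $H_u\preceq G$. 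Conversely, if $(u_1,\dots,u_k)$ is a basis of $H_u\cong\F_k$ and $H_u\preceq G$, then by Theorem \ref{MaxIndAreBases} it realizes $p^{(k)}_0$ in $H_u$, hence in $G$.

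\emph{Case $k=1$.} As $G$ is torsion-free, both sides force $u_1\neq 1$ and then $H_u=\langle u_1\rangle\cong\Z$, so only the equivalence ``$u_1$ realizes $p_0$ in $G$'' $\Leftrightarrow$ ``$G$ is an extended hyperbolic tower over $\langle u_1\rangle$'' is at stake. I would pass to $\hat G:=G*\langle t\rangle$. Since $\hat G$ is the height-zero hyperbolic tower over $G$ (Definition \ref{HypTower} with $m=0$) and $G$ is non-abelian torsion-free hyperbolic, Theorem \ref{SubtowersAreElementary} gives $G\preceq\hat G$; in particular ``$u_1$ realizes $p_0$'' holds in $G$ iff it holds in $\hat G$. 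The new free generator $t$ can moreover be taken generic over $G$ (the genericity of a new free factor, standard for $T_{fg}$), so $\langle u_1,t\rangle=\langle u_1\rangle*\langle t\rangle\cong\F_2$ and, by stationarity of $p_0$ and the discussion of Section \ref{Stable}, $u_1$ realizes $p_0$ in $\hat G$ iff $(u_1,t)$ realizes $p^{(2)}_0$ in $\hat G$. By the case $k\geq 2$ applied to $\hat G$ and this pair, the latter holds iff $\hat G$ is an extended hyperbolic tower over $\langle u_1,t\rangle$. It then remains to prove the purely group-theoretic equivalence ``$G$ is an extended hyperbolic tower over $\langle u_1\rangle$'' $\Leftrightarrow$ ``$G*\langle t\rangle$ is a hyperbolic tower over $\langle u_1\rangle*\langle t\rangle$'': the forward direction follows by adjoining the free factor $\langle t\rangle$ to one suitably chosen non-surface-type vertex group at each floor of a tower over $\langle u_1\rangle$, which turns the (possibly extended) bottom floor into an honest one via the ``$G'*\Z$'' alternative in the last clause of Definition \ref{HypFloor}; the reverse direction requires peeling a free-factor $\Z$ off the top group and the base of a hyperbolic tower simultaneously, by induction on its height.

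\emph{Main obstacle.} The logical skeleton is routine; the work sits in the inputs, above all in the case $k=1$ — making precise that an extended hyperbolic tower over the infinite cyclic group $\langle u_1\rangle$ corresponds, after adjoining a free $\Z$, to an honest hyperbolic tower over $\langle u_1\rangle*\langle t\rangle\cong\F_2$, i.e.\ controlling how a free-factor $\Z$ interacts with the tower operations and, in the reverse direction, peeling it off floor by floor. This is exactly the phenomenon the ``extended'' refinement of Definitions \ref{HypFloor} and \ref{HypTower} was introduced to handle, and an induction on the height of the tower using the $G*\Z$ clause has to be carried out with care. The auxiliary facts used — that finitely generated models of $T_{fg}$ and hyperbolic towers over free groups are torsion-free hyperbolic, that $G\preceq G*\Z$, and that the new free generator of $G*\Z$ is generic over $G$ — are standard but should be recorded.
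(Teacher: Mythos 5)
The statement is not proved in this paper; it is quoted as Theorem 7.1 of \cite{PerinSklinosHomogeneity}, so there is no in-paper proof to compare against. Judged on its own, your treatment of $k\geq 2$ is correct: the first-order content of $p^{(k)}_0$ forces $H_u$ to be free on $\bar u$; equality of $\tp^{H_u}(\bar u)$ and $\tp^G(\bar u)$ (both equal to $p^{(k)}_0$ by Theorem \ref{MaxIndAreBases} and stationarity) propagates to all tuples of $H_u$ and hence to $H_u\preceq G$; and Theorems \ref{SubtowersAreElementary} and \ref{ElementarySubgroups}, together with the remark after Definition \ref{HypTower} that extended towers over non-abelian subgroups are honest towers, convert elementarity into the tower structure. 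Whether this is the route of \cite{PerinSklinosHomogeneity} I cannot say, but the argument is sound.

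The case $k=1$ is exactly where you locate the difficulty, and the sketch does not close the gap. The claim that $t$ realizes the generic type over $G$ in $G*\Z$ is a substantial fact and needs a citation, not an appeal to folklore. More seriously, the forward direction of the theorem for $k=1$ rests on the implication ``$G*\langle t\rangle$ a hyperbolic tower over $\langle u_1\rangle*\langle t\rangle$ $\Rightarrow$ $G$ an extended hyperbolic tower over $\langle u_1\rangle$'', which you propose to establish by peeling a $\Z$ off both ends by induction on height. But an arbitrary hyperbolic tower structure on $G*\Z$ over $\F_2$ has no a priori reason to respect the visible free-product splitting: nothing in Definition \ref{HypFloor} forces the floor decompositions or the retractions to carry $\langle t\rangle$ as an identifiable free factor at every stage, so one cannot simply strip it off floor by floor. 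Making this precise requires a structural lemma relating free factors to graph-of-groups-with-surfaces decompositions, in the spirit of the Grushko and accessibility arguments used in tower theory, and it is exactly the kind of low-complexity delicacy that forced the introduction of extended floors and towers in the first place. Until that reverse implication is proved, the forward implication of the theorem for $k=1$ is not established by your argument.
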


\subsection{Hyperbolic tower structures of the connected sum of four projective planes}

Admitting a structure of hyperbolic tower is quite a restrictive condition. For example we have 
\begin{lemma} \label{NoHypFloorInFreeGroup} If $\F$ is a free group, it does not admit any structure of extended hyperbolic floor over a subgroup.
\end{lemma}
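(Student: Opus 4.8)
The plan is to argue by contradiction: suppose $\F$ admits a structure of extended hyperbolic floor $(\F, \F', r)$, and derive a contradiction by an Euler characteristic / rank count. The starting observation is that being an extended hyperbolic floor gives a non-trivial graph of groups decomposition $\Gamma$ of $\F$ with surfaces, which is bipartite between surface-type vertices and non-surface-type vertices, in which every edge group is cyclic (it embeds as a maximal boundary subgroup of a surface group, hence is infinite cyclic), and $\F'$ is the free product of the non-surface-type vertex groups appearing in $T^0$. Since $\F$ is free, every vertex group of $\Gamma$ is free (subgroups of free groups are free), and in particular each surface-type vertex group $\pi_1(\Sigma_v)$ is the fundamental group of a surface \emph{with boundary}; the defining conditions force the surfaces that are not once-punctured tori to have Euler characteristic $\le -2$, and the retraction condition forces $r$ (or $r'$ on $\F * \Z$) to send each surface-type vertex group to a non-abelian subgroup.

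First I would set up the rank bookkeeping using the graph-of-groups structure: writing $\F = \pi_1(\Gamma)$ and using that all edge groups are $\Z$ and all vertex groups are free of finite rank, one gets $\rk(\F) = \sum_v \rk(G_v) - |E(\Gamma)| + b_1(\Gamma)$ where $b_1$ is the first Betti number of the underlying graph (the usual formula for a graph of groups with infinite cyclic edge groups, each edge contributing one relation that kills one generator of an adjacent boundary $\Z$, or a stable letter). Next I would exploit the retraction $r \colon \F \to \F'$: since $r$ restricts to the identity on $\F'$ and $\F'$ is a free factor-like piece (the free product of the non-surface vertex groups of $T^0$), and since $\F$ is Hopfian (finitely generated free groups are Hopfian), a proper retraction of a free group onto a free subgroup $\F'$ forces $\rk(\F') < \rk(\F)$ — and in fact, because $r$ is a retraction, $\F \cong \F' * \Ker(r)$ is impossible to reconcile with the surface contributions unless the surfaces contribute nothing, which is the crux.

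The key step, and the main obstacle, is to show that the presence of at least one surface-type vertex (which is guaranteed since $\Gamma$ is non-trivial, bipartite, and surface-type vertices are what make it a graph of groups with surfaces) is incompatible with $\F$ being free together with the retraction sending surface groups to non-abelian images. The clean way is this: a surface-type vertex $v$ with surface $\Sigma_v$ of $\chi \le -1$ carries a boundary curve, and the retraction condition means $r(\pi_1(\Sigma_v))$ is non-abelian, so $r$ does not kill $\pi_1(\Sigma_v)$; on the other hand, one can apply an Euler characteristic argument in the spirit of Lemma~\ref{GOGSurfaceGroup} and the structure theory — a free group, viewed as $\pi_1$ of a graph (a surface with boundary that is homotopy equivalent to a wedge of circles, i.e.\ a planar-type situation), cannot contain an essential $\pi_1$-injective subsurface of negative Euler characteristic carrying a non-peripheral handle once we track that the total space is aspherical of dimension $1$. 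More concretely: I would show that the graph of groups $\Gamma$, having a surface vertex group $\pi_1(\Sigma_v)$ mapping by a boundary-respecting embedding, would make $\F$ contain (after the retraction) a quotient that is a non-trivial amalgam over $\Z$ with a surface group factor, contradicting that every subgroup of $\F'$ — itself free — that arises as $r(\pi_1(\Sigma_v))$ must be free, while the retraction being a left inverse forces it to be a free factor of $\pi_1(\Sigma_v)$ of the same rank only if $\Sigma_v$ has free fundamental group compatible with the boundary relation, and then a rank count on the single relator $[a_1,a_2]\cdots = \gamma_1\cdots\gamma_r$ (or $d_1^2\cdots d_q^2 = \gamma_1 \cdots \gamma_r$) versus the rank of $\F$ yields the contradiction.

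In practice I expect the cleanest writeup uses Theorem~\ref{ElementaryFreeIFFHypTower} or its proof machinery indirectly, but a self-contained argument should go: assume the floor exists; take the surface vertex $v$; consider the subgroup $\langle \pi_1(\Sigma_v), \F' \rangle$ and the restriction of $r$; use that $\F'$ together with one surface already generates a group admitting a non-trivial cyclic splitting whose vertex groups do not all retract compatibly; and conclude by Grushko/rank-count that $\rk(\F) > \rk(\F')$ strictly while simultaneously the retraction structure $\F = \F' * N$ forces the surface contribution to add a free factor — but surface groups with boundary are freely indecomposable relative to their boundary, so the surface group cannot be absorbed, giving the contradiction. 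The main obstacle is precisely making rigorous the statement that a genuine negative-Euler-characteristic surface piece cannot sit as a vertex group in a cyclic splitting of a free group while also admitting the required non-abelian retraction — this is essentially a baby case of "free groups have no hyperbolic floor structure," and I would be prepared to invoke the relative freely-indecomposable nature of surface groups and a careful Euler-characteristic inequality rather than trying to do it purely by generator counting.
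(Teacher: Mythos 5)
Your proposal is not a proof; it is an outline in which the decisive step is explicitly left unresolved. You write "The key step, and the main obstacle, is to show that\ldots" and later "The main obstacle is precisely making rigorous the statement that\ldots" --- these are exactly the places where a contradiction must be extracted, and no argument is actually given there. As it stands, the text accumulates plausible-sounding ingredients (Euler characteristic bookkeeping, relative free indecomposability of surface groups, Shenitzer-type constraints on cyclic splittings of free groups) without assembling them into a deduction.

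There is also a concrete false step. You assert that because $r$ is a retraction, "$\F \cong \F' * \Ker(r)$". The kernel of a retraction $r\colon G \to H$ (with $H \le G$ and $r|_H=\mathrm{id}$) gives an \emph{internal semidirect product} $G = \Ker(r) \rtimes H$, not a free product. (Consider $r\colon \langle a,b\rangle \to \langle a\rangle$, $b\mapsto 1$: the kernel is the normal closure of $b$, which is infinitely generated, and $\langle a,b\rangle$ is certainly not $\langle a\rangle * \Ker(r)$.) Any rank or Grushko argument resting on this identification collapses. Separately, your rank formula $\rk(\F)=\sum_v \rk(G_v) - |E| + b_1(\Gamma)$ is equivalent (via $b_1 = |E|-|V|+1$) to the Euler-characteristic identity $\chi(\F)=\sum_v \chi(G_v)$, which by itself only yields an inequality consistent with the floor structure; the contradiction cannot come from this count alone, so the "crux" you defer really is the whole content.

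For comparison, the paper's proof is a citation: it invokes Lemma 5.19 of \cite{PerinElementary}, which shows free groups admit no hyperbolic floor structure, and then observes that the cited argument \emph{does not use} the condition that the retraction sends surface vertex groups to non-abelian images, hence applies verbatim to extended floors. Note that this is a point of genuine divergence from your sketch: you lean on the non-abelian-image hypothesis ("the retraction condition means $r(\pi_1(\Sigma_v))$ is non-abelian, so $r$ does not kill $\pi_1(\Sigma_v)$"), whereas the argument being cited deliberately avoids it --- that avoidance is what makes the extension to the \emph{extended} case free of charge. If you want a self-contained proof, the usual route is to fix a surface vertex $v$, use that boundary subgroups of a surface with $\chi\le -1$ are not free factors of $\pi_1(\Sigma_v)$, and run a Shenitzer/Stallings-fold argument on a one-edge collapse of the Bass--Serre tree to show that some edge group must nevertheless be a free factor of an adjacent vertex group; pushing this around the bipartite graph produces the contradiction. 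That is precisely the step your writeup names but does not carry out.
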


\begin{proof} Lemma 5.19 in \cite{PerinElementary} states that free groups do not admit structures of hyperbolic floors. The argument given for the proof does not use the fact that surface type vertex groups have non abelian images by the retraction $r$, thus it can be applied to \textbf{extended} hyperbolic floors as well.
\end{proof}

Let $S_4$ denote the fundamental group of the surface $\Sigma_4$ which is the connected sum of four projective planes 
(i.e. the non orientable closed surface of characteristic $-2$). It has a trivial structure of hyperbolic tower over $\{1\}$. 

But this is not the only extended hyperbolic tower structure it admits. The following lemma gives some structures of extended hyperbolic floor for $S_4$. 

\begin{lemma} \label{FloorStructuresOfS4} Suppose $H$ is a non trivial subgroup of $S_4$, over which $S_4$ admits a structure of extended hyperbolic floor. Then $H$ is cyclic, and $S_4$ admits one of the following presentations
\begin{itemize}
 \item $\langle h, a, b, c \mid h^{2} = a^2 b^2 c^2 \rangle$;
 \item $\langle h, a, b, t \mid h tht^{-1} = a^2 b^2 \rangle$;
\end{itemize}
where $h$ generates $H$.
Conversely, given such a presentation, $S_4$ admits a structure of extended hyperbolic floor over the subgroup $H$ generated by $h$. 
\end{lemma}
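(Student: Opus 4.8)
The plan is to prove the two implications separately.

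\textbf{The converse (constructing the floor structures).} For the first presentation I take the graph of groups $\Gamma$ with a single surface-type vertex whose group is $\pi_1$ of the connected sum of three projective planes with one disc removed (a compact non-orientable surface with boundary, Euler characteristic $-2$, boundary word $a^2b^2c^2$), and a single non-surface vertex with group $\langle h\rangle$, joined by one edge whose group is $\langle a^2b^2c^2\rangle$ on the surface side and the index-two subgroup $\langle h^2\rangle$ on the other side; topologically this glues a cross-cap to the boundary circle, producing $\Sigma_4$, and one checks $\pi_1(\Gamma)\cong\langle h,a,b,c\mid h^2=a^2b^2c^2\rangle$. Since $a,b,c$ retract into the abelian group $\langle h\rangle$, the first alternative of Definition \ref{HypFloor} cannot hold, but $r':S_4*\langle s\rangle\to\langle h\rangle*\langle s\rangle$ given by $a\mapsto s$, $b\mapsto s^{-1}$, $c\mapsto h$ (fixing $h,s$) sends the surface vertex group onto $\langle h,s\rangle$, so the second alternative holds. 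For the second presentation I take $\Gamma$ with a single surface-type vertex carrying $\pi_1$ of the Klein bottle with two discs removed (Euler characteristic $-2$, boundary words $\gamma_1,\gamma_2$ with $d_1^2d_2^2=\gamma_1\gamma_2$) and a single non-surface vertex $\langle h\rangle$, joined by two edges: one, in the chosen maximal subtree, identifies $\gamma_1$ with $h$, and one, with Bass--Serre letter $t$, identifies $\gamma_2$ with $h$. This glues the two boundary circles of the twice-punctured Klein bottle to one another along a curve representing $h$, again producing $\Sigma_4$, and $\pi_1(\Gamma)\cong\langle h,a,b,t\mid hth t^{-1}=a^2b^2\rangle$. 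Again the first alternative fails and $r':S_4*\langle s\rangle\to\langle h\rangle*\langle s\rangle$ with $a\mapsto hs$, $b\mapsto s^{-1}$, $t\mapsto s$ sends the surface vertex group onto $\langle h,s\rangle$. In both cases the remaining verifications are routine: the surface of $\Gamma$ has Euler characteristic $-2\le-2$, the decomposition is bipartite, and since the non-surface vertices of $T^0$ are pairwise non-adjacent the subgroup they generate is their free product --- here just $\langle h\rangle$ --- so $G'$ is the asserted infinite cyclic group.

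\textbf{The forward direction.} Let $(S_4,H,r)$ be an extended hyperbolic floor with decomposition $\Gamma$ and Bass--Serre presentation $(T^1,T^0)$, so $H=G'$ is the free product of the non-surface vertex groups. Every vertex group of $\Gamma$ is free: a vertex group of a non-trivial graph-of-groups decomposition has infinite index (a group acting without inversions on a finite tree fixes a vertex, so a finite vertex orbit would give $\Gamma$ one vertex), hence is free as an infinite-index subgroup of a surface group. Build the graph of spaces $X$ dual to $\Gamma$ from compact surfaces-with-boundary, wedges of circles, and annuli; all pieces are aspherical and the edge maps $\pi_1$-injective, so $X$ is a $K(S_4,1)$, hence homotopy equivalent to $\Sigma_4$. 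Comparing Euler characteristics (edges contributing $0$),
\[
\sum_{v\text{ surface type}}\chi(\Sigma_v)\;+\;\sum_{v\text{ non-surface}}(1-\rk G_v)\;=\;\chi(\Sigma_4)=-2 .
\]
Each surface summand is $\le-1$, with equality only for a once-punctured torus (by the Euler characteristic clause of Definition \ref{HypFloor}), and each non-surface summand is $\le0$ since the vertex group is a non-trivial free group (incident edge groups being non-trivial). Hence $\Gamma$ has at most two surface-type vertices.

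Now enumerate. Since $\mathbb{Z}$-splittings of closed surface groups are dual to simple closed curves, $\Gamma$ is geometric: surface-type vertices are embedded subsurfaces, non-surface vertices are annular or M\"obius-band (hence infinite cyclic) or graph pieces, and every edge group embeds with index $1$ (annular case) or $2$ (M\"obius case) in a non-surface vertex group; with bipartism and connectedness this leaves finitely many configurations. If there are two surface-type vertices they are both once-punctured tori joined to a single infinite cyclic vertex, but then $H_1(S_4)$ would have torsion-free rank $4$, contradicting $H_1(S_4)=\mathbb{Z}^3\oplus\mathbb{Z}/2$. If there is one surface-type vertex which is a once-punctured torus, $\Gamma$ has one non-surface vertex of rank $2$, so $S_4=\langle a,b\rangle*_{\langle[a,b]\rangle}\langle x,y\rangle$; matching $H_1$ leaves $S_4\cong\langle a,b,x,y\mid[a,b]=x^2y^2\rangle$, but then $H=\langle x,y\rangle$ is non-abelian, so the first alternative forces $[r(a),r(b)]=x^2y^2$ in a free group --- impossible, the left side lying in the commutator subgroup --- so this case does not occur. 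Finally, if the unique surface-type vertex $\Sigma_{v_1}$ has $\chi=-2$, all non-surface vertices are infinite cyclic, and $\Sigma_{v_1}$ is one of the five compact surfaces with boundary and characteristic $-2$ (four-holed sphere, two-holed torus, one-holed $N_3$, two-holed Klein bottle, three-holed $\mathbb{RP}^2$); for each one runs through the finitely many attaching patterns of boundary circles to infinite cyclic vertices with edge-index $1$ or $2$, and decides, using $H_1$, one-endedness, the fact that $S_4$ is a surface group, and that essential simple closed curves are not proper powers, whether $\pi_1(\Gamma)\cong S_4$, and when it is, whether the retraction can exist. The decisive tool here is an abelianization/exponent-sum obstruction: a relation of the form $[\,\cdot\,,\cdot\,]=h\,r'(t)\,h\,r'(t)^{-1}$ is unsolvable in $\langle h,s\rangle$ because its right side has $h$-exponent-sum $2$ whereas a commutator has exponent-sum $0$ (this kills the two-holed-torus configuration), while the same obstruction is harmless for a relation $a^2b^2=h\,r'(t)\,h\,r'(t)^{-1}$. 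The only patterns surviving all tests are the one-holed $N_3$ with one cross-cap attached, and the two-holed Klein bottle with its two boundary circles identified; these yield exactly the two presentations, in each of which the unique non-surface vertex group is $\langle h\rangle$, so $H=\langle h\rangle$ is infinite cyclic.

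\textbf{Main obstacle.} The hard part is this final enumeration: establishing geometricity and the index-$1$-or-$2$ constraint on edge groups, organizing the attaching patterns for the five characteristic-$-2$ surfaces, and above all identifying for each surviving pattern the correct homological invariant that either rules out $\pi_1(\Gamma)\cong S_4$ or rules out existence of the (ordinary or $*\mathbb{Z}$-) retraction. The recurring phenomenon is that when the surface relation is forced to equal a product of commutators it must vanish in the abelianization of the retract, while products of squares need only have even image there --- which is precisely why the non-orientable presentations survive and their orientable analogues do not.
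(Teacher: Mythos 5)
Your converse direction is essentially correct, and in fact the retraction you write for the second presentation ($a\mapsto hs$, $b\mapsto s^{-1}$, $t\mapsto s$, giving $r'(h\,t\,h\,t^{-1})=hshs^{-1}=r'(a^2b^2)$) is cleaner than the one in the paper. But the forward direction has a genuine gap: the main case -- a single surface-type vertex of characteristic $-2$ -- is not actually carried out. You write that ``one runs through the finitely many attaching patterns \dots\ and decides, using $H_1$, one-endedness, the fact that $S_4$ is a surface group, and that essential simple closed curves are not proper powers, whether $\pi_1(\Gamma)\cong S_4$, and when it is, whether the retraction can exist,'' and you yourself flag this as the hard part. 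That is a programme, not a proof. With five candidate surfaces of characteristic $-2$ and several attaching patterns apiece (edge index $1$ or $2$, distinct or identified non-surface vertices, loops or trees), the claim that exactly two patterns survive needs to be argued case by case, not asserted.

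The paper avoids this enumeration entirely by a much more economical observation: having made the decomposition geometric (Lemma \ref{GOGSurfaceGroup}), it looks at the \emph{complement} $\Sigma'=\Sigma_4\setminus\Sigma$ rather than at $\Sigma$. Since $\chi(\Sigma)+\chi(\Sigma')=-2$ and $\chi(\Sigma)\le -2$, one gets $\chi(\Sigma')=0$, so every component of $\Sigma'$ is a cylinder or a M\"obius band (hence cyclic $\pi_1$); the complement of a M\"obius band in $\Sigma_4$ is automatically a once-punctured connected sum of three projective planes, and the complement of a cylinder is a twice-punctured Klein bottle, which hands you the two presentations with essentially no casework. The two-punctured-tori subcase is likewise killed more cleanly by noting $\Sigma'$ would be a cylinder joining them and would force $\Sigma_4$ orientable; your $H_1$-rank argument works but tacitly assumes a single cyclic vertex, which needs a word of justification (two M\"obius-band vertices would disconnect $\Sigma_4$). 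Finally, you should double-check that your list of surviving patterns really is complete: for instance the twice-punctured Klein bottle with two separate M\"obius-band vertices gives $S_4=\langle d_1,d_2,h_1,h_2\mid d_1^2d_2^2=h_1^2h_2^2\rangle$ and admits the retraction $d_i\mapsto h_i$, and reconciling this configuration with the conclusion ``$H$ is cyclic'' requires a careful look at what the floor group $G'$ is; your write-up does not address it.
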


These two structures are illustrated by Figure \ref{HypTowerStructuresOfS4}. In both pictures, the fundamental group of the space inside the box is 
$H = \langle h \rangle$. The fundamental group of the upper surface in the picture on the left is the subgroup generated by $a, b, c$ in $S_4$. In the picture on the right, the fundamental group of the upper surface is the subgroup of $S_4$ generated by $a$ and $b$.

\begin{figure}[!ht] 
\begin{center}
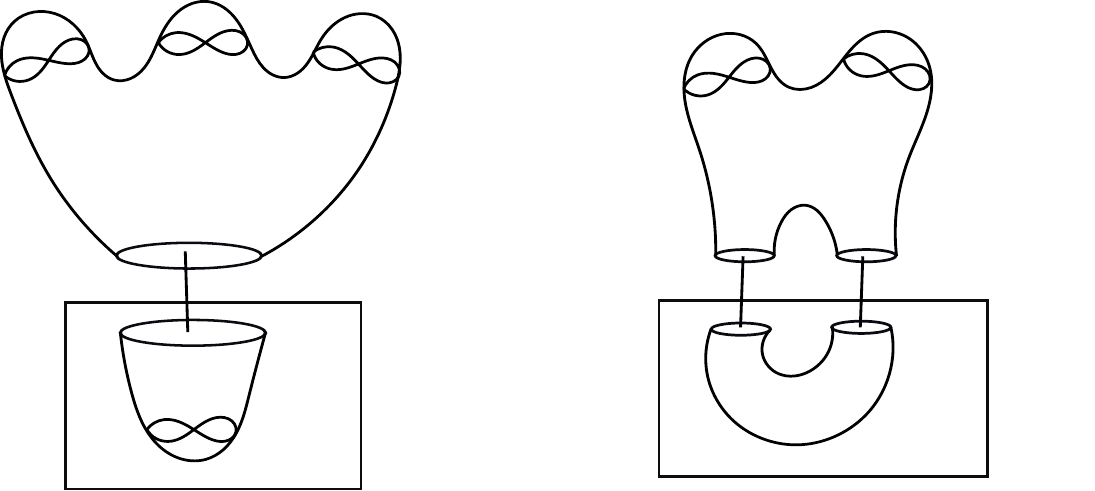
\caption{Hyperbolic floor structures of the connected sum of four projective planes.}
\label{HypTowerStructuresOfS4}
\end{center}
\end{figure}

\begin{proof} Suppose $S_4$ admits a structure of hyperbolic floor over a proper subgroup $H$, 
and denote by $\Gamma$ the associated graph of groups decomposition. By Lemma \ref{GOGSurfaceGroup}, 
$\Gamma$ is dual to a set of simple closed curves on $\Sigma_4$. In particular the surfaces of 
$\Gamma$ correspond to $\pi_1$-injected subsurfaces of $\Sigma_4$. Denote by $\Sigma$ the (possibly disconnected) 
subsurface of $\Sigma_4$ formed by all these subsurfaces, and by $\Sigma'$ the closure of its complement in 
$\Sigma_4$. We have $\chi(\Sigma) + \chi(\Sigma') = \chi(\Sigma_4) = -2$. Since the connected 
components of $\Sigma$ are surfaces of a hyperbolic floor decomposition, they are 
punctured tori or have characteristic at most $-2$. This implies in particular that $\Sigma$ can have at most two connected components. If it has exactly two, they must be punctured tori, and $\Sigma'$ has characteristic $0$ with two boundary components. Thus $\Sigma'$ must be a cylinder, but this contradicts the non orientability of $\Sigma_4$. So in fact, $\Sigma$ is connected, and there are only two possibilities: either $\chi(\Sigma) = -1$ and $\Sigma$ is a punctured torus, or $\chi(\Sigma) = -2$.

In the first case, $\Sigma'$ has one boundary component, characteristic $-1$, and is non orientable: 
it must be a punctured Klein bottle. However there cannot be a retraction of $S_4$ on the fundamental 
group of $S'$ of this punctured Klein bottle. Indeed, $S_4$ then admits a presentation of the form $\langle a, b, d_1, d_2 \mid [a,b]= d^2_1d^2_2 \rangle$ where $S'$ is the free subgroup of rank $2$ of $S_4$ generated by $d_1$ and $d_2$. If there exists a retraction $r: S_4 \to S'$, it fixes $d_1$ and $d_2$ thus $d^2_1d^2_2 = r([a,b])$ is a commutator, this is a contradiction. 

Thus the second alternative holds. This implies that $\chi(\Sigma')=0$. In particular, each connected component of 
$\Sigma'$ has characteristic $0$, hence must be a cylinder or a M\"obius band. Since $H$ is contained in a subgroup of 
$S_4$ corresponding to one of these connected components it must be cyclic, say $H = \langle h \rangle$. If $H$ is contained in a connected component of $\Sigma'$ which is a M\"obius band, its complement is a punctured connected sum of three projective planes, so $S_4$ admits a presentation as $\langle h, a, b, c \mid h^{2} = a^2 b^2 c^2 \rangle$. If $H$ is contained in a connected component of $\Sigma'$ which is a cylinder, its complement is a twice punctured Klein bottle, so $S_4$ admits a presentation as $\langle h, a, b, t \mid h tht^{-1} = a^2 b^2 \rangle$.

Let us now prove the converse. Let $\Gamma$ be the graph of groups decomposition for $S_4$ 
which consists of a single non surface type vertex with corresponding vertex group $\langle h \rangle$, 
and a single surface type vertex with corresponding group the subgroup generated by $\{a, b, c\}$ 
(respectively $\{a, b, h, tht^{-1}\}$) and corresponding surface a punctured connected sum of three projective planes 
(respectively a twice punctured Klein bottle). Consider the retraction 
$r': S_4 * \langle x \rangle \to \langle h \rangle * \langle x \rangle$ defined 
by $r'(a) = h$ and $r'(b) = r'(c^{-1})=x$ (respectively $r'(a) = r'(b^{-1})=x$ and $r'(t)=1$), and $r'(x)=x$. 
The conditions of Definition \ref{HypFloor} are satisfied by $\Gamma$ and $r'$, which proves the result.
\end{proof}

Noting that $S_4$ is freely indecomposable, one gets
\begin{corollary} \label{TowerStructuresOfS4} If $S_4$ admits a structure of hyperbolic tower over a subgroup $H$, then either $H$ is trivial or $H = \langle h \rangle$ is cyclic and $S_4$ admits one of the two presententations in terms of $h$ given in Lemma \ref{FloorStructuresOfS4}.
\end{corollary}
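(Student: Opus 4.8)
The plan is to peel the tower apart one floor at a time, apply Lemma~\ref{FloorStructuresOfS4} to the top floor, and then show the tower cannot descend any further. Suppose $S_4$ is an (extended) hyperbolic tower over $H$, witnessed as in Definition~\ref{HypTower} by a chain $S_4 = G^0 \ge G^1 \ge \cdots \ge G^m \ge H$ in which each $(G^k, G^{k+1}, r_k)$ is an (extended) hyperbolic floor for $0 \le k \le m-1$, and $G^m = H * F * S_1 * \cdots * S_p$. First I would dispose of the case $m = 0$: then $S_4 = H * F * S_1 * \cdots * S_p$, and since $S_4$ is freely indecomposable exactly one of these free factors is non-trivial (and equal to $S_4$). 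As $F$ is free whereas $S_4$ is not, $F \ne S_4$, so either $H = \{1\}$, which gives the conclusion, or $H = S_4$ --- the completely degenerate identity tower over the whole group, which one either excludes by convention or simply sets aside.

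Now assume $m \ge 1$, so that $(G^0, G^1, r_0) = (S_4, G^1, r_0)$ is an (extended) hyperbolic floor. If $G^1 \ne \{1\}$, then Lemma~\ref{FloorStructuresOfS4} applies and gives that $G^1 = \langle h \rangle$ is cyclic and that $S_4$ admits one of the two presentations listed there; moreover $\langle h \rangle \cong \Z$, since $S_4$, being the fundamental group of a closed surface of negative Euler characteristic, is torsion-free. In either case $G^1$ is cyclic, hence a free group, so by Lemma~\ref{NoHypFloorInFreeGroup} it carries no structure of extended hyperbolic floor. Consequently there can be no floor $(G^1, G^2, r_1)$ in the chain, and therefore $m = 1$.

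It follows that $G^1 = G^m = H * F * S_1 * \cdots * S_p$ with $G^1$ cyclic, in particular abelian. A free product of two or more non-trivial groups is never abelian, so at most one of the factors $H, F, S_1, \dots, S_p$ is non-trivial; were some $S_i$ present it would have to be this unique non-trivial factor, forcing $G^1 = S_i$ to be non-abelian, a contradiction --- so $p = 0$ and $G^1 = H * F$ with $F$ free. By free indecomposability of a cyclic group, either $H$ is trivial, or $F$ is trivial and $H = G^1$; in the latter case $G^1 \ne \{1\}$, so by the previous paragraph $H = \langle h \rangle$ and $S_4$ admits one of the two presentations with $h$ generating $H$ --- precisely the assertion. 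The step I expect to need the most care is the bookkeeping of matching Definition~\ref{HypTower} to the hypotheses of Lemmas~\ref{FloorStructuresOfS4} and~\ref{NoHypFloorInFreeGroup} --- keeping straight which subgroup is the retract at each floor, and that $H \le G^1$ --- together with not forgetting the degenerate $m = 0$, $H = S_4$ possibility; all the genuinely geometric content is already contained in those two lemmas.
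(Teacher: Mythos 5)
Your proof is correct and fleshes out exactly what the paper leaves implicit behind the remark ``Noting that $S_4$ is freely indecomposable, one gets\ldots'': dispose of $m=0$ by free indecomposability of $S_4$, apply Lemma~\ref{FloorStructuresOfS4} to the first floor to see $G^1$ is cyclic, invoke Lemma~\ref{NoHypFloorInFreeGroup} to cap the tower at $m=1$, and finish with free indecomposability of $\Z$. The only loose ends are cosmetic: you condition on $G^1 \neq \{1\}$ without observing that $G^1 = \{1\}$ immediately forces $H=\{1\}$ (and in fact cannot occur, since the retraction of a hyperbolic floor must have non-abelian image on surface vertex groups), and the degenerate reading $H = S_4$ in the $m=0$ case, which you rightly set aside, is what the corollary's statement tacitly excludes.
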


\section{Omitting, realizing and isolating types in the theory of free groups} \label{HypTowerSec} \label{IsolatingResultsSec}

We observe that Theorem \ref{MaxIndAreBases} 
implies that for $n \geq 2$, the free group $\F_n$ on $n$ generators realizes $p_0^{(n)}$ but omits $p_0^{(n+1)}$. 
Thus, it is natural to ask whether this holds also for $n=1$: is there a group $G$ which realizes $p_0$ yet omits $p_0^{(2)}$ ?

Pillay answered the above question in the affirmative in a non constructive way, using purely model theoretic methods. 
He then naturally asked whether an explicit model realizing $p_0$, but omitting $p_0^{(2)}$, exists. 
Such a group is exhibited in Proposition \ref{S4OmitsP02}, however we first give Pillay's elegant, but nonconstructive, argument.

His proof is based on the notion of semi-isolation, which we recall below, together with the following result (see \cite[Theorem 2.1]{SklinosGenericType}).
\begin{theorem}\label{P0Isolated} The generic type $p_0$ is not isolated in $T_{fg}$.
\end{theorem}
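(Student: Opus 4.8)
Suppose, towards a contradiction, that $p_0$ is isolated by a formula $\phi(x)$; the plan is to contradict the genericity of $\phi$ using the description of the realisations of $p_0$ in a free group. By Theorem \ref{MaxIndAreBases}, an element $u$ of a non-abelian free group $\F$ realises $p_0$ if and only if $u$ belongs to some basis of $\F$: a one-element independent set of realisations of $p_0$ extends (by Zorn's lemma, since unions of chains of independent sets are independent by finite character of forking) to a maximal such set, which by Theorem \ref{MaxIndAreBases} is a basis, and conversely any member of a basis lies in such a maximal set. Write $P$ for the set of primitive elements of $\F_2$. Since $\F_2 \models T_{fg}$ and $\phi$ implies $p_0$, every element of $\F_2$ satisfying $\phi$ realises $p_0$, hence is primitive; that is, $\phi(\F_2) \subseteq P$.

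Next I would use the theory of generic types in stable groups. As $T_{fg}$ is stable and connected (Poizat), it has a unique generic type, which is $p_0$, and $p_0$ consists precisely of the generic formulas --- where a formula $\psi(x)$ is generic if finitely many left translates of $\psi(\M)$ cover $\M$. In particular $\phi$ is generic, so there are a fixed integer $m$ and elements $g_1, \dots, g_m$ with $\M = g_1\phi(\M) \cup \dots \cup g_m\phi(\M)$; since this is expressed by a first-order sentence, it holds in $\F_2 \preceq \M$, giving $g_1, \dots, g_m \in \F_2$ with
\[ \F_2 \;=\; g_1\phi(\F_2) \cup \dots \cup g_m\phi(\F_2) \;\subseteq\; g_1 P \cup \dots \cup g_m P. \]

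The heart of the argument is then to contradict this covering by a growth estimate. Let $B_n$ denote the ball of radius $n$ in $\F_2$ with respect to the standard word metric, so that $|B_n|$ grows like $3^n$. The set $P$, by contrast, is exponentially thin: the conjugacy classes of primitive elements of $\F_2$ are parametrised by $\Q \cup \{\infty\}$, the class of slope $p/q$ having cyclic length of the order of $|p| + |q|$, so there are only $O(n^2)$ such classes of (cyclic) length at most $n$; together with the standard bound that a fixed cyclically reduced word of length $\ell$ has at most $O(3^{(n-\ell)/2})$ conjugates in $B_n$, this yields $|P \cap B_n| = O(3^{n/2})$, whence $|P \cap B_n|/|B_n| \to 0$. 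Since left translation by a fixed element distorts balls only by a bounded multiplicative factor, $|g_i P \cap B_n|/|B_n| \to 0$ for each $i$, so $\sum_{i=1}^m |g_i P \cap B_n|/|B_n| \to 0$; but $\F_2 = \bigcup_{i=1}^m g_i P$ forces this sum to be at least $1$. This contradiction shows that no formula isolates $p_0$.

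The main obstacle is the density estimate $|P \cap B_n|/|B_n| \to 0$ for the primitive elements of $\F_2$. This is classical, but a careful proof needs the continued-fraction (Christoffel word) description of primitive conjugacy classes, together with a routine but not entirely trivial bound on the number of short conjugates of a given cyclically reduced word; for the argument above one in fact only needs that $P$ has exponential growth rate strictly smaller than that of $\F_2$. A purely model-theoretic alternative --- producing, for each quantifier rank $n$, an imprimitive element that is $n$-elementarily equivalent to a primitive one --- seems harder to control, since relating quantifier rank to the geometric complexity of the associated systems of equations is itself delicate; the growth argument is the most self-contained route.
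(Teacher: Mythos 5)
The paper does not prove this theorem: it is quoted as Theorem~2.1 of \cite{SklinosGenericType}, and the statement labelled \ref{P0Isolated} is a citation, not a result proved in this article. So there is no ``paper's own proof'' to compare against. That said, your argument is correct and, as far as I can tell, is essentially the argument of the cited reference: both rest on the two pillars (a) a formula in $p_0$ is generic, hence by the Poizat/Pillay theory of stable groups finitely many translates of it cover the group, a fact which transfers to $\F_2$ via the sentence $\exists y_1\ldots y_m\,\forall x\,\bigvee_i\phi(y_i^{-1}x)$; and (b) realizations of $p_0$ in $\F_2$ are exactly the primitive elements (your Zorn argument from Theorem~\ref{MaxIndAreBases} is the right way to extract this), and the primitives of $\F_2$ are exponentially negligible.

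Two small remarks on the details. First, the only place where Zorn's lemma needs care is that what you extend is a chain of independent sets \emph{of realizations of $p_0$}; finite character of forking handles independence, and the condition ``every element realizes $p_0$'' is obviously preserved under unions, so this is fine. Second, your density estimate is correct but you should phrase the conjugate count a bit more carefully: for a cyclically reduced primitive $w$ of length $\ell$, every conjugate of $w$ of length $k$ has the reduced form $p\tilde w p^{-1}$ with $\tilde w$ a cyclic rotation of $w$ and $|p|=(k-\ell)/2$, which gives the bound $O(\ell\cdot 3^{(n-\ell)/2})$ for conjugates in $B_n$; combined with the $O(\ell)$ primitive classes of cyclic length exactly $\ell$ (Christoffel words of slope $p/q$ with $|p|+|q|=\ell$, coprime), one gets $|P\cap B_n|=O(3^{n/2})$ as you claim. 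As you note, any growth rate strictly below $3$ would suffice, so none of the sharper enumeration results on primitives are actually needed. The purely model-theoretic route you contemplate at the end (controlling quantifier rank) is indeed much less tractable here; the genericity-plus-thinness route is the natural one.
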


\begin{definition}
Let $\mathbb{M}$ be a big saturated model of a stable theory $T$, and let $\bar{a},\bar{b}$ be tuples in $\mathbb{M}$. 
The type $tp(\bar{a}/\bar{b})$ is semi-isolated, if there is a formula $\phi(\bar{x},\bar{y})$ (over the empty set) such that:
\begin{itemize}
  \item[(i)] $\mathbb{M}\models \phi(\bar{a},\bar{b})$;
  \item[(ii)] $\mathbb{M}\models \phi(\bar{x},\bar{b})\rightarrow tp(\bar{a})$. 
\end{itemize}
\end{definition}
 
 The following lemma connecting the notions of semi-isolation and forking will be useful (see \cite[Lemma 9.53(ii)]{PillayIntro}).
 \begin{lemma}\label{Fork}
 Suppose $tp(\bar{a}/\bar{b})$ is semi-isolated and $tp(\bar{a})$ is not isolated. Then $tp(\bar{a}/\bar{b})$ forks 
 over $\emptyset$.
 \end{lemma}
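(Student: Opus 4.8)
\emph{Proof proposal.} The plan is to argue by contradiction: assuming $\tp(\bar a/\bar b)$ does \emph{not} fork over $\emptyset$, I will exhibit a formula over $\emptyset$ that isolates $\tp(\bar a)$, contradicting the hypothesis that $\tp(\bar a)$ is not isolated. Fix a formula $\phi(\bar x,\bar y)$ over $\emptyset$ witnessing semi-isolation, so $\M\models\phi(\bar a,\bar b)$ and $\M\models\phi(\bar x,\bar b)\to\tp(\bar a)$. The first observation is that this implication is automorphism-invariant in $\bar b$: if $\bar b'$ realizes $q:=\tp(\bar b)$, choose $h\in\Aut(\M)$ with $h(\bar b)=\bar b'$; applying $h$ to $\M\models\forall\bar x(\phi(\bar x,\bar b)\to\chi(\bar x))$, which holds for every $\chi\in\tp(\bar a)$ and in which $\chi$ has no parameters, yields $\M\models\forall\bar x(\phi(\bar x,\bar b')\to\chi(\bar x))$. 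Hence $\phi(\bar x,\bar b')\vdash\tp(\bar a)$ for \emph{every} realization $\bar b'$ of $q$.

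Next I would invoke symmetry of forking: since $\tp(\bar a/\bar b)$ does not fork over $\emptyset$, neither does $\tp(\bar b/\bar a)$. By stability, extend $\tp(\bar b/\bar a)$ to a global type $\tilde q$ that does not fork over $\emptyset$; then $\tilde q\supseteq q$, $\phi(\bar a,\bar y)\in\tilde q$, and the canonical base of $\tilde q$ lies in $\mathrm{acl}^{\mathrm{eq}}(\emptyset)$, so $\tilde q$ is definable over $\mathrm{acl}^{\mathrm{eq}}(\emptyset)$. Now consider the definition of $\tilde q$ for the formula $\phi$ read with $\bar x$ playing the role of the parameter variable: a formula $e(\bar x)$ over $\mathrm{acl}^{\mathrm{eq}}(\emptyset)$ such that for all $\bar c$ (of the sort of $\bar a$) one has $\phi(\bar c,\bar y)\in\tilde q$ iff $\M\models e(\bar c)$. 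As $\phi(\bar a,\bar y)\in\tilde q$, we get $\M\models e(\bar a)$. Writing $e(\bar x)=e_0(\bar x,\bar d)$ with $e_0$ over $\emptyset$ and $\bar d\in\mathrm{acl}^{\mathrm{eq}}(\emptyset)$, and letting $\bar d=\bar d_1,\dots,\bar d_n$ be the finitely many conjugates of $\bar d$ over $\emptyset$, set $e^*(\bar x):=\bigvee_{j=1}^n e_0(\bar x,\bar d_j)$; since the finite set $\{\bar d_1,\dots,\bar d_n\}$ is itself $\emptyset$-definable, $e^*$ is (equivalent to) a formula over $\emptyset$, and $\M\models e^*(\bar a)$.

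The heart of the argument is then the claim that $e^*(\bar x)\vdash\tp(\bar a)$. Suppose $\bar c\models e^*$, say $\M\models e_0(\bar c,\bar d_j)$, and pick $g\in\Aut(\M)$ with $g(\bar d)=\bar d_j$; this says precisely that $\phi(\bar c,\bar y)\in g(\tilde q)$. The type $g(\tilde q)$ is a complete global type containing $q$, so $q(\bar y)\cup\{\phi(\bar c,\bar y)\}$ is consistent; let $\bar c'$ realize it, so $\bar c'\models q$ and $\M\models\phi(\bar c,\bar c')$. By the first paragraph $\phi(\bar x,\bar c')\vdash\tp(\bar a)$, and since $\bar c$ satisfies $\phi(\bar x,\bar c')$ we conclude $\bar c\models\tp(\bar a)$. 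Thus $e^*$ isolates $\tp(\bar a)$ — a contradiction — so $\tp(\bar a/\bar b)$ must fork over $\emptyset$.

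The one genuinely substantive step is the use of stability to pass from non-forking of $\tp(\bar b/\bar a)$ over $\emptyset$ to a global non-forking extension definable over $\mathrm{acl}^{\mathrm{eq}}(\emptyset)$; I expect the only fiddly part to be the bookkeeping around $\mathrm{acl}^{\mathrm{eq}}(\emptyset)$ — checking that replacing $e$ by the disjunction $e^*$ of its $\emptyset$-conjugates still yields a formula over $\emptyset$ (in the home sort) for which the consistency-with-$q$ argument above goes through. It is worth stressing the key trick: one works with $\tp(\bar b/\bar a)$ rather than $\tp(\bar a/\bar b)$, so that the $\phi$-definition of the global type is a condition on $\bar x$, which is exactly what is needed to pin down $\tp(\bar a)$.
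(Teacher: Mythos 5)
The paper does not prove Lemma~\ref{Fork}; it is cited directly from \cite[Lemma 9.53(ii)]{PillayIntro}, so there is no in-paper argument to compare against. Your proposal is a correct, self-contained proof of the lemma: the contrapositive reduction, the use of symmetry to work with $\tp(\bar b/\bar a)$, the passage to a global non-forking extension definable over $\mathrm{acl}^{\mathrm{eq}}(\emptyset)$, and the disjunction over $\emptyset$-conjugates of the $\phi$-definition parameter to land back in an $\emptyset$-formula are all sound, and the consistency argument via $g(\tilde q)\supseteq q$ correctly closes the loop. This is essentially the standard stability-theoretic argument for this fact.
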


We are now ready to give Pillay's proof.

\begin{theorem}\label{P0squaredOverP0} \cite{PillayPrivate} There exists a group $G$ such that $G\models p_0$ and $G\not\models p_0^{(2)}$.
\end{theorem}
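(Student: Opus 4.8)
The plan is to produce a finitely generated group $G$ realizing $p_0$ but not $p_0^{(2)}$, or more precisely a monster-model witness, by leveraging the non-isolation of $p_0$ (Theorem \ref{P0Isolated}) together with Lemma \ref{Fork}. First I would set the stage: work in the monster model $\M$ of $T_{fg}$, and fix some realization $\bar a$ of $p_0$, say a single element $a$. The point is to find $b$ realizing $p_0$ such that $a$ forks with $b$ over $\emptyset$ while nonetheless $\{a,b\}$ still ``looks like'' it should be extendable — no, rather the real target is the opposite: I want to exhibit a situation where $p_0$ is realized but $p_0^{(2)}$ is not; in a saturated model both are realized, so $G$ must be a \emph{non-saturated} (finitely generated) model. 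But Pillay's argument, being non-constructive, sidesteps building $G$ explicitly and instead argues by contradiction using omitting-types-style reasoning inside the relevant model.

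Here is the approach I would actually carry out. Suppose toward a contradiction that every model of $T_{fg}$ realizing $p_0$ also realizes $p_0^{(2)}$. Take the prime model over a single realization $a$ of $p_0$ — since adding the constant $a$ to the language, $p_0$ becomes the (complete) type over $\emptyset$ of a generic element, and $a$ realizes it. In that prime model $M_a$, by our assumption there is a pair $(a_1, a_2)$ realizing $p_0^{(2)}$, i.e. $a_1, a_2$ each realize $p_0$ and are independent over $\emptyset$. Now the key step: because $M_a$ is prime (atomic) over $a$, the type $tp(a_1, a_2 / a)$ is isolated, hence in particular $tp(a_1/a)$ and $tp(a_2/a)$ are semi-isolated over $a$ — and more to the point $tp(a_i/a)$ is semi-isolated. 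Since $tp(a_i) = p_0$ is not isolated (Theorem \ref{P0Isolated}), Lemma \ref{Fork} gives that $tp(a_i/a)$ forks over $\emptyset$, so $a_i$ forks with $a$ over $\emptyset$ for $i = 1,2$. But then $\{a_1, a_2\}$ is an independent set over $\emptyset$ of realizations of $p_0$, each forking with $a$: this shows $prwt(p_0) \geq 2$. That alone is not yet a contradiction, so the argument must iterate: in the prime model over $a$, the set of all realizations $b$ of $p_0$ with $tp(b/a)$ isolated is large, each such $b$ forks with $a$, and among them one can extract (using stationarity of $p_0$ and the fact that a maximal independent subset of this set has cardinality bounded by $prwt$ of a nonforking extension) an independent set of unbounded size, contradicting... — actually the clean contradiction Pillay uses is simpler: the existence of $p_0^{(2)}$ realized in the prime model $M_a$ over $a$ forces, by the semi-isolation/forking dichotomy, that $a$ forks with both $a_1$ and $a_2$, but $a$ itself realizes $p_0$ and $\{a_1, a_2\}$ realizes $p_0^{(2)}$; running the symmetric argument shows $a$ is ``pulled into'' the independent pair in a way incompatible with $p_0$ being the generic (connected) type. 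I would phrase the final contradiction as: $a$ would then be non-orthogonal to $p_0^{(2)}$ in a way that, combined with $p_0$ being stationary and non-isolated, is impossible for a finitely generated (atomic) structure.

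Let me restate the cleanest line. Let $M_a$ be the prime model of $T_{fg}(a)$ over $\{a\}$, where $a \models p_0$; such a model exists and is atomic over $a$ (this uses that $T_{fg}$ is stable, hence has prime models over arbitrary sets, or at least over finite tuples — and finitely generated models of $T_{fg}$ are exactly the ones relevant, via Theorem \ref{ElementaryFreeIFFHypTower}). Since $M_a \models p_0$ (as $a$ does), if we assume for contradiction that $M_a \models p_0^{(2)}$, pick $(b_1, b_2) \in M_a$ realizing $p_0^{(2)}$. Atomicity gives $tp(b_i / a)$ isolated, a fortiori semi-isolated; non-isolation of $p_0 = tp(b_i)$ plus Lemma \ref{Fork} forces $b_i \mathop{\not\models}$ independence from $a$ over $\emptyset$, i.e. $b_i$ forks with $a$ over $\emptyset$ for $i=1,2$. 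By symmetry of forking, $a$ forks with each of $b_1, b_2$ over $\emptyset$; since $\{b_1, b_2\}$ is independent over $\emptyset$, this exhibits that $a$ — a realization of $p_0$ — forks with a $2$-element independent set of realizations of $p_0$, so $prwt(p_0) \geq 2$. Iterating the same construction one floor up (taking the prime model over a realization of $p_0^{(n)}$ and finding a realization of $p_0^{(n+1)}$) and invoking the same lemma shows $prwt(p_0) = \infty$, which is consistent with the known infinite weight but does not contradict it; so the actual conclusion is simply: \emph{not} every model realizing $p_0$ realizes $p_0^{(2)}$ — because if it did, we could repeat to realize $p_0^{(\omega)}$ in a countable atomic model over a single element, and a single generic element cannot semi-isolate an infinite independent set of generics in a finitely generated model, since by Theorem \ref{ElementsOfPrimitiveTypeIFF} realizing $p_0^{(k)}$ requires a hyperbolic tower structure over a free group of rank $k$, and a fixed finitely generated $G$ bounds such $k$. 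Hence the prime model $M_a$ omits $p_0^{(k)}$ for $k$ large, and in particular (chasing the forking argument) omits $p_0^{(2)}$; so $G = M_a$ works.

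The main obstacle, and the subtle point to get right, is the passage from ``$tp(b_i/a)$ semi-isolated $\Rightarrow$ $b_i$ forks with $a$'' to an actual contradiction. Forking of $b_i$ with $a$ over $\emptyset$ is not by itself absurd — $p_0$ has infinite weight, after all. The real engine must be that in the \emph{prime} (atomic) model over $a$, \emph{every} realization $b$ of $p_0$ has $tp(b/a)$ isolated, hence forks with $a$; so the set of realizations of $p_0$ in $M_a$ is entirely contained in the forking-cone of $a$, and a maximal independent subset of it has size at most $wt(p_0)$... which is infinite, so still no contradiction there either. Thus the decisive input is the \emph{geometric} bound from Theorem \ref{ElementsOfPrimitiveTypeIFF} / Theorem \ref{ElementaryFreeIFFHypTower}: the finitely generated group $M_a$ can only realize $p_0^{(k)}$ for $k$ up to some finite bound depending on its rank, so it cannot realize $p_0^{(\omega)}$; combined with the forking argument which shows that realizing $p_0^{(2)}$ in an atomic-over-$a$ model propagates upward, we conclude $M_a$ must already omit $p_0^{(2)}$. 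I would present the proof by first establishing the upward propagation lemma (if a finitely generated model atomic over a realization of $p_0^{(n)}$ realizes $p_0^{(n+1)}$, repeat) and then invoking finiteness of the rank to terminate; the base case $n=1$ with $G = M_a$ then yields exactly the statement. I expect writing the propagation step carefully — making sure the atomicity is preserved and that Lemma \ref{Fork} applies at each stage with the growing parameter tuple still yielding non-isolation of the relevant type — to be where the care is needed.
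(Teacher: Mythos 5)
Your proposal correctly identifies the three key model-theoretic ingredients --- semi-isolation, Lemma \ref{Fork}, and the non-isolation of $p_0$ from Theorem \ref{P0Isolated} --- but it misses the decisive move that makes Pillay's argument close, and the substitutes you reach for do not hold up.

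The most serious issue is foundational: you want to work in ``the prime model $M_a$ of $T_{fg}(a)$ over $\{a\}$'' and assert that such a model exists and is atomic ``since $T_{fg}$ is stable, hence has prime models over arbitrary sets.'' That inference is false. Mere stability does not give existence of prime models over parameter sets; that requires $\omega$-stability (or at least a countable atomic theory over the parameters), and $T_{fg}$ is stable but not $\omega$-stable, indeed not even superstable. The paper avoids this trap by invoking the omitting types theorem directly: to produce a model of $p_0(c)$ omitting $p_0^{(2)}$, it is enough to show $p_0^{(2)}$ is \emph{not isolated} in the complete theory $p_0(c)$ (in the language with the new constant $c$). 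No prime model is needed.

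The second, and more instructive, gap is that you never find a contradiction. You correctly derive ``$b_i$ forks with $a$ over $\emptyset$'' from semi-isolation plus Lemma \ref{Fork}, and you correctly observe that this is not absurd --- $p_0$ has infinite weight, so forking of a generic with an independent pair of generics is perfectly possible. You then try to escape by iterating and appealing to the geometric bound from Theorem \ref{ElementsOfPrimitiveTypeIFF}, but this does not close either: the prime models (even granting their existence) over longer tuples are different groups at each stage, so there is no single finitely generated group whose rank caps the iteration. Pillay's argument sidesteps all of this with one clean observation you do not use: $(\F_2, e_1)$ is itself a model of $p_0(c)$. If $\phi(x,y,c)$ isolated $p_0^{(2)}$ in $p_0(c)$, a realization $(a,b)$ of $\phi(x,y,e_1)$ in $\F_2$ would by Theorem \ref{MaxIndAreBases} be a basis of $\F_2$, so $e_2 = w(a,b)$ for some word $w$; then $\exists x,y\,(\phi(x,y,u)\wedge z=w(x,y))$ semi-isolates $tp(e_2/e_1)$, Lemma \ref{Fork} forces $tp(e_2/e_1)$ to fork over $\emptyset$, and this flatly contradicts Theorem \ref{MaxIndAreBases} applied to the basis $\{e_1,e_2\}$, where independence is guaranteed. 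The point of working in $\F_2$ is precisely that there you already \emph{know} the two generics are independent, so the forking conclusion is an outright contradiction rather than a compatible-but-unhelpful fact. Without that, the semi-isolation machinery spins without terminating. (The geometric route you gesture at is genuinely the content of Proposition \ref{S4OmitsP02} in the paper, but that is a separate, explicit construction via $S_4$, not a patch for the omitting-types argument.)
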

\begin{proof}
By the omitting types theorem (see \cite[Theorem 4.2.3,p.125]{MarkerModelTheory}), it is enough to prove that $p_0^{(2)}$ is not isolated in $p_0(c)$, i.e.
the complete theory in $\mathcal{L}=\{\cdot,^{-1},1,c\}$ axiomatized by $p_0(c)$. 
Note that if $\F_2=\langle e_1,e_2 \rangle$, then $(\F_2, e_1)$ is a model of $p_0(c)$. 

Suppose, for the sake of contradiction, that $\phi(x,y,c)$ isolates $p_0^{(2)}$ in $p_0(c)$. Let $(a,b)$ be 
a realization of $\phi(x,y,e_1)$ in $\F_2$. As $\F_2\models \phi(x,y,e_1)\rightarrow p_0^{(2)}$ 
we have by Theorem \ref{MaxIndAreBases} that $a,b$ form a basis of $\F_2$. In particular there is a word $w(x,y)$, such that $w(a,b)=e_2$. 
Now it is easy to see that the formula $\psi(z,u):=\exists x,y \left( \phi(x,y,u)\land z= w(x,y) \right)$ semi-isolates $tp(e_2/e_1)$. But 
as $tp(e_2)$ is not isolated, Lemma \ref{Fork} gives that $tp(e_2/e_1)$ forks over $\emptyset$, contradicting Theorem \ref{MaxIndAreBases}.
\end{proof}

We note that the above proof does not give much information about the group $G$, apart from the fact that it is countable.

Theorem \ref{P0Isolated} implies that there exists a model of $T_{fg}$ omitting the generic type. 
Using the results above, we can show that this model cannot be finitely generated. 

\begin{proposition}\label{P0InAllFGModels} Suppose $G$ is a finitely generated model of the theory $T_{fg}$ of non abelian free groups. 
Then $G$ realizes $p_0$.
\end{proposition}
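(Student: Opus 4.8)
The plan is to exploit the characterization of finitely generated models given by Theorem \ref{ElementaryFreeIFFHypTower}: such a $G$ is an extended hyperbolic tower over the trivial subgroup. The key observation is that since $G$ is not cyclic (it is a model of $T_{fg}$, hence non abelian), the tower has at least one floor, so somewhere in the tower structure there is a surface-type vertex carrying a surface $\Sigma$ of Euler characteristic at most $-2$ (or a once-punctured torus), together with a retraction sending its vertex group to a non abelian image. My aim is to produce inside $G$ a single element $u$ realizing $p_0$, and by Theorem \ref{ElementsOfPrimitiveTypeIFF} (with $k=1$) it suffices to exhibit $u \in G$ such that $\langle u \rangle$ is infinite cyclic (i.e. $u$ has infinite order, automatic in a torsion-free group) and $G$ admits a structure of extended hyperbolic tower over $\langle u \rangle$.

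First I would reduce to the top floor: write $G = G^0 \geq G^1$ with $(G^0, G^1, r_0)$ an (extended) hyperbolic floor, with associated graph-of-groups decomposition $\Gamma$ containing a surface-type vertex $v$ whose surface $\Sigma$ satisfies the Euler characteristic constraint. Inside $\pi_1(\Sigma)$ one can choose a non-separating simple closed curve $\gamma$ (or more carefully, a curve whose complement in $\Sigma$ is connected and still has enough complexity); cutting $\Sigma$ along $\gamma$ realizes $\pi_1(\Sigma)$, and hence $G$, as a graph of groups in which $\langle \gamma \rangle$ is an edge group, and one reorganizes this into a hyperbolic floor decomposition exhibiting $G$ as an extended hyperbolic floor over a subgroup $G''$ containing a conjugate of $\langle \gamma \rangle$ as (part of) a non surface-type vertex group. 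Iterating the original tower below, one concludes that $G$ is an extended hyperbolic tower over $\langle \gamma \rangle$; then $u = \gamma$ realizes $p_0$ by Theorem \ref{ElementsOfPrimitiveTypeIFF}. The concrete way to do this is precisely the second presentation appearing in Lemma \ref{FloorStructuresOfS4} in miniature: a surface with a curve cut out retracts onto a lower floor containing that curve.

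The main obstacle is verifying that, after cutting the surface along $\gamma$, the resulting decomposition genuinely satisfies all the clauses of Definition \ref{HypFloor} — in particular bipartism, the condition that the remaining surfaces have the right Euler characteristic, and the existence of a retraction sending surface-type vertex groups to non abelian images. This requires choosing $\gamma$ so that the complementary subsurface(s) are neither too simple (discs, annuli, Möbius bands) nor destroy the non abelian-image property of the retraction; handling the low-complexity boundary cases (once-punctured tori, surfaces of Euler characteristic exactly $-2$) is where care is needed, exactly as in the analysis of $S_4$ above. An alternative, possibly cleaner route that avoids surgery altogether: take any basis element $e$ of the free group $G^m = F$ at the bottom of the tower (if $F$ is non trivial) or any simple closed curve on one of the closed surfaces $S_i$ (if present) — but since $G^m$ may be trivial with no closed surface factors, the top-floor surgery argument is needed in general, and that is the technical heart.
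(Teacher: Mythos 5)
Your framework is correct: invoke Theorem \ref{ElementaryFreeIFFHypTower} to get a tower structure over $\{1\}$, and then, by Theorem \ref{ElementsOfPrimitiveTypeIFF} with $k=1$, reduce to finding a cyclic subgroup $Z$ over which $G$ is an extended hyperbolic tower. But the argument you build on that framework has a genuine gap, caused by two false claims. First, ``since $G$ is not cyclic, the tower has at least one floor'' is wrong: the tower may well have $m=0$ (for instance $G=\F_2$ has $G=G^0=G^m$ with no floors at all), in which case there is no top-floor surface $\Sigma$ on which to do the cutting surgery, and your main argument has no starting point. Second, the reason you give for setting aside the ground-floor route --- that ``$G^m$ may be trivial with no closed surface factors'' --- is also false: $G^m = \F * S_1 * \cdots * S_q$ is always a nontrivial free product. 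When $m=0$ it equals the non-cyclic group $G$; when $m\geq 1$ it is the image of a hyperbolic-floor retraction, which by Definition \ref{HypFloor} cannot be trivial (a trivial retract cannot receive non-abelian images of surface vertex groups, nor can it be cyclic for the $r'$ alternative).

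Once these two points are corrected, the ``alternative, possibly cleaner route'' you set aside is precisely the paper's proof, and it works unconditionally. One goes to the ground floor $G^m = \F * S_1 * \cdots * S_q$. If $\F\neq 1$, take any cyclic free factor $Z$ of $\F$ and you are done immediately. If $\F=1$ and $q\geq 1$, one does not simply ``take any simple closed curve on $S_i$'' --- one writes $S_1$ in its standard closed-surface presentation and explicitly retracts it onto a rank-$2$ subgroup $H$ generated by a chosen pair of generators, extending to a retraction $G^m \to H * S_2 * \cdots * S_q$; this adds a new floor with $Z$ a cyclic free factor of $H$. The construction fails only when $S_1$ is the connected sum of four projective planes (the complementary subsurface becomes a punctured Klein bottle, which is not admissible), and this exception is handled separately: for $q\geq 2$ by a retraction landing partly in $S_2$, and for $q=1$ by the explicit floor structure of Lemma \ref{FloorStructuresOfS4}. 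Your top-floor surgery --- even in the cases where a top floor exists --- would run into exactly the same low-complexity obstructions you flag, plus the additional difficulty of reorganizing an existing floor decomposition so that $\langle\gamma\rangle$ sits in a non-surface-type vertex and bipartism is preserved; the ground-floor approach circumvents all of that by only ever \emph{adding} a bottom floor to a free product.
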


\begin{proof} By Theorem \ref{ElementaryFreeIFFHypTower}, $G$ admits a structure of extended hyperbolic tower over $\{1\}$. 
The ground floor $G^m$ of this tower is a non trivial free product of a (possibly trivial) free group $\F$ with 
fundamental groups $S_1, \ldots, S_q$ of closed hyperbolic surfaces. By Theorem \ref{ElementsOfPrimitiveTypeIFF}, 
it is enough to show that $G$ has a structure of extended hyperbolic tower over a cyclic group $Z$.

We may assume $\F$ is trivial, since otherwise any cyclic free factor of $\F$ will do.

If $q$ is nonzero, $S_1$ admits a presentation as 
$$ \langle a_1, \ldots, a_g, b_1, \ldots, b_g \mid [a_1, b_1]\ldots[a_g,b_g] = 1 \rangle$$ 
if it is orientable, and
$$ \langle d_1, \ldots, d_p \mid d^2_1 \ldots d_p^2= 1 \rangle$$ if not. Let $H$ be the subgroup generated by $a_1, b_1$ 
in the first case, and $d_1, d_2$ in the second.

The map $r$ fixing $a_1, b_1$, sending $a_2$ to $b_1$, $b_2$ to $a_1$ and $a_j, b_j$ to $1$ for $j>2$ (respectively fixing $d_1, d_2$, 
sending $d_3$ to $d^{-1}_2$,  $d_4$ to $d^{-1}_1$ and $d_j$ to $1$ for $j>4$) is a retraction of $S_1$ onto the subgroup 
$H \simeq \F_2$, which we can extend into a retraction of $G^m$ onto $H * S_2 * \ldots * S_p$. 
However, this retraction makes $(G^m, r(G^m), r)$ an extended hyperbolic floor only if the surface corresponding 
to $a_2, b_2, \ldots, a_g, b_g$ (respectively $d_3, \ldots, d_p$) is a punctured torus or has characteristic at most $-2$. 
This fails to be the case only in the non orientable case and if $p=4$, that is, if $S_1$ is the connected sum of four 
projective planes. In all the other cases, we can take $Z$ to be any cyclic free factor of $H$ and the result is proved.

If $S_1$ is the fundamental group of the connected sum of four projective planes, 
choose a presentation of $S_1$ as $\langle h, a, b, c \mid h^{2} = a^2 b^2 c^2 \rangle$. If $q \geq 2$, 
we define a retraction $r: S_1 *  \ldots * S_q \to \langle h \rangle * S_2 * \ldots * S_q$ by $r(a) = h$, $r(b) = r(c^{-1}) = s$ 
for some non trivial element $s$ of $S_2$. Then $(G^m, r(G^m), r)$ is a hyperbolic floor. 
If $q=1$, we have seen in Lemma \ref{FloorStructuresOfS4} that $G^m$ admits a structure of extended hyperbolic floor over $\langle h \rangle$, 
so $G$ is an extended hyperbolic tower over $\langle h \rangle$.
\end{proof}

On the other hand, the following proposition 
gives an alternative proof of Theorem \ref{P0squaredOverP0}.

\begin{proposition} \label{S4OmitsP02}
Let $S_4$ be the fundamental group of the connected sum of four projective planes. Then $S_4$ omits $p^{(2)}_0$. 
\end{proposition}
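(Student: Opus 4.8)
The plan is to reduce the statement, via Theorem~\ref{ElementsOfPrimitiveTypeIFF}, to a purely geometric fact about $S_4$ that has essentially already been established. Recall that $S_4$ is a non abelian finitely generated group, so Theorem~\ref{ElementsOfPrimitiveTypeIFF} applies: a pair $(u_1,u_2)$ of elements of $S_4$ realizes $p_0^{(2)}$ if and only if $H=\langle u_1,u_2\rangle$ is free of rank $2$ and $S_4$ admits a structure of extended hyperbolic tower over $H$. Hence to prove that $S_4$ omits $p_0^{(2)}$ it suffices to show that $S_4$ admits no structure of extended hyperbolic tower over a subgroup which is free of rank $2$.

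The main step is then as follows. Suppose for contradiction that $H\leq S_4$ is free of rank $2$ and that $S_4$ carries an extended hyperbolic tower structure over $H$. Since $H$ is non abelian, the remark following Definition~\ref{HypTower} tells us that this extended hyperbolic tower is in fact a (non extended) hyperbolic tower over $H$. But Corollary~\ref{TowerStructuresOfS4} asserts that whenever $S_4$ admits a structure of hyperbolic tower over a subgroup $H$, that subgroup is either trivial or infinite cyclic; in either case it cannot be free of rank $2$. This contradiction shows that no pair of elements of $S_4$ realizes $p_0^{(2)}$, i.e. $S_4$ omits $p_0^{(2)}$.

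I do not expect a genuine obstacle in this argument, since the hard work is exactly what was carried out in Lemma~\ref{FloorStructuresOfS4} and its Corollary~\ref{TowerStructuresOfS4} (classifying the possible floor and tower structures of $S_4$ using Lemma~\ref{GOGSurfaceGroup}, the Euler characteristic bookkeeping $\chi(\Sigma)+\chi(\Sigma')=-2$, and the non-retraction obstruction coming from commutators). The one point that requires a little care is the passage from ``extended hyperbolic tower'' to ``hyperbolic tower'': this is legitimate precisely because a rank $2$ free group is non abelian, so that the remark after Definition~\ref{HypTower} applies. Everything else is a direct application of the cited results, so the proof will be short.
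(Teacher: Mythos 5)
Your proposal is correct and follows the paper's own proof essentially verbatim: invoke Theorem~\ref{ElementsOfPrimitiveTypeIFF} to reduce to a geometric statement and then apply Corollary~\ref{TowerStructuresOfS4}. Your extra remark explaining that a rank-$2$ free subgroup is non abelian and hence any extended hyperbolic tower over it is in fact a hyperbolic tower is a small but genuine clarification of a point the paper's proof leaves implicit.
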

\begin{proof}
Suppose, for the sake of contradiction, that $S_4\models p_0^{(2)}(u,v)$. Then by Theorem \ref{ElementsOfPrimitiveTypeIFF}, $u,v$ generate a free group $H$ of rank $2$ over which $S_4$ admits a structure of hyperbolic tower. 
By Corollary \ref{TowerStructuresOfS4} we know that no such structure exists.
\end{proof}

We conclude this section by giving another application of Corollary \ref{TowerStructuresOfS4}.

The following result is easily deduced from Proposition 5.9 and Proposition 6.2 of \cite{PerinSklinosHomogeneity}.
\begin{proposition}\label{EmbeddingOrHyperbolicFloor} 
Let $G$ and $G'$ be torsion-free hyperbolic groups. Let $\bar{u}$ and $\bar{v}$ be non trivial tuples of elements of $G$ and $G'$ respectively, and let $U$ be a finitely presented subgroup of $G$ which contains $\bar{u}$ and is freely indecomposable with respect to it.

If $\tp^G(\bar{u}) = \tp^{G'}(\bar{v})$, then either there exists an embedding $U \hookrightarrow G'$ which sends $\bar{u}$ to $\bar{v}$, or $U$ admits the structure of a hyperbolic floor over $\langle \bar{u} \rangle$.
\end{proposition}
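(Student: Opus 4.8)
The proof naturally splits into a model-theoretic reduction, which consumes the hypothesis $\tp^G(\bar u) = \tp^{G'}(\bar v)$, and a purely geometric core, which is exactly what Propositions 5.9 and 6.2 of \cite{PerinSklinosHomogeneity} supply. For the reduction, fix a finite generating set of $U$ of the form $\bar u, \bar s$ (possible since $U$ is finitely generated and contains $\bar u$) and, since $U$ is finitely presented, a finite presentation $U = \langle \bar u, \bar s \mid r_1(\bar u,\bar s), \ldots, r_k(\bar u,\bar s)\rangle$. For any group $K$ and tuple $\bar a$ in $K$, a homomorphism $U \to K$ sending $\bar u$ to $\bar a$ is the same datum as a tuple $\bar b$ in $K$ with $r_j(\bar a,\bar b)=1$ for all $j$; so ``there exists such a homomorphism'' is the formula $\varphi(\bar x) := \exists \bar y\, \bigwedge_j r_j(\bar x,\bar y)=1$. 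The inclusion $U \hookrightarrow G$ witnesses $G \models \varphi(\bar u)$, hence $\varphi(\bar x)\in\tp^G(\bar u)=\tp^{G'}(\bar v)$, hence $G' \models \varphi(\bar v)$: there is a homomorphism $h \colon U \to G'$ with $h(\bar u)=\bar v$. Moreover, for every word $w$ we have $w(\bar u)=1$ in $U$ iff $w(\bar u)=1$ in $G$ iff $w(\bar v)=1$ in $G'$ (again by $\tp^G(\bar u)=\tp^{G'}(\bar v)$), so every homomorphism in $\Hom_{\bar u\mapsto\bar v}(U,G')$ restricts to an isomorphism $\langle\bar u\rangle \to \langle\bar v\rangle$ sending $\bar u$ to $\bar v$. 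At this point the first-order input is exhausted and only a statement about the (nonempty) set $\Hom_{\bar u\mapsto\bar v}(U,G')$ remains.

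For the geometric step, if some homomorphism in $\Hom_{\bar u\mapsto\bar v}(U,G')$ is injective we are in the first alternative. Otherwise no homomorphism $U\to G'$ sending $\bar u$ to $\bar v$ is injective; since $U$ is finitely generated and freely indecomposable relative to $\langle\bar u\rangle$, this is precisely the configuration handled by the shortening argument relative to $\langle\bar u\rangle$. Applied to $\Hom_{\bar u\mapsto\bar v}(U,G')$, it produces a non-degenerate preretraction of $U$ with respect to its cyclic JSJ decomposition relative to $\langle\bar u\rangle$ (this is the content of Proposition 5.9 of \cite{PerinSklinosHomogeneity}), and Proposition 6.2 of \cite{PerinSklinosHomogeneity} — a version relative to $\langle\bar u\rangle$ of Proposition 5.11 of \cite{PerinElementary} — converts such a preretraction into a hyperbolic floor structure $(U,U',r)$. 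Because the JSJ was taken relative to $\bar u$, the subgroup $\langle\bar u\rangle$ is elliptic and sits inside a non-surface-type vertex group of the floor decomposition, i.e. $U$ is a hyperbolic floor over $\langle\bar u\rangle$ in the sense required, completing the dichotomy.

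The real difficulty lies entirely inside Proposition 5.9 of \cite{PerinSklinosHomogeneity} — the shortening argument and the control of the modular group it needs — which we are only invoking as a black box; our contribution is the first-order reduction and the bookkeeping. The points that still require care on our side are: performing the JSJ of $U$ \emph{relative to} $\bar u$ so that $\langle\bar u\rangle$ stays elliptic and lands in a non-surface-type vertex group (this is where free indecomposability relative to $\bar u$ enters), verifying that the non-injective family above meets the hypotheses of Propositions 5.9 and 6.2, and the low-complexity exceptional surface configurations flagged in Remark \ref{ChangeInDefHypTower} (once-punctured tori and the like), which are dealt with in the erratum and corrected versions cited there and do not affect the present statement.
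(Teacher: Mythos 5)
Your approach matches the paper's: both invoke Propositions 5.9 and 6.2 of \cite{PerinSklinosHomogeneity}, which the paper cites without further detail. The first-order reduction you give (using finite presentability of $U$ to encode the existence of a homomorphism $U\to G'$ sending $\bar u\mapsto\bar v$, plus the observation that such a homomorphism restricts to an isomorphism $\langle\bar u\rangle\to\langle\bar v\rangle$) is a correct and useful elaboration of the ``easily deduced'' remark.

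One sentence, however, overstates what the reduction achieves: ``At this point the first-order input is exhausted and only a statement about the (nonempty) set $\Hom_{\bar u\mapsto\bar v}(U,G')$ remains.'' That is not quite right. The shortening argument behind Proposition 5.9 does not produce a preretraction merely from the data that $\Hom_{\bar u\mapsto\bar v}(U,G')$ is nonempty and contains no injective map. One needs, roughly, a stable sequence $(h_n)$ in this set that kills no fixed nontrivial element of $U$ for $n$ large --- a ``test sequence'' --- and constructing such a sequence uses the full type equality $\tp^G(\bar u)=\tp^{G'}(\bar v)$ repeatedly (for each finite set $S\subset U\smallsetminus\{1\}$, a suitable formula in $\tp^G(\bar u)$ asserts the existence of a homomorphism that is nontrivial on $S$). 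So the first-order hypothesis is consumed throughout the geometric step, not only at the start. This is part of what your final paragraph flags as ``verifying that the non-injective family meets the hypotheses of Propositions 5.9 and 6.2,'' but it deserves to be spelled out rather than treated as bookkeeping, since it is precisely why the statement requires type \emph{equality} rather than just the existence of one homomorphism. With that caveat, your proof is a faithful expansion of the paper's.
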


We can now prove:

\begin{theorem}\label{Wagner}
Let $\bar{v}$ be a non trivial tuple of elements in a non abelian free group $\F$. Then $tp^{\F}(\bar{v})$ is not isolated.
\end{theorem}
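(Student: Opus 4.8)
The plan is to argue by contradiction using the dichotomy of Proposition \ref{EmbeddingOrHyperbolicFloor} together with the classification of hyperbolic tower (and floor) structures on $S_4$ obtained in Corollary \ref{TowerStructuresOfS4}. Suppose $\tp^{\F}(\bar v)$ is isolated by some formula $\phi(\bar x)$. The idea is that an isolated type is realized in \emph{every} model of the theory, so in particular it is realized in $S_4$, which is a finitely generated model of $T_{fg}$ by Theorem \ref{ElementaryFreeIFFHypTower} (being an extended hyperbolic tower over $\{1\}$). So there is a tuple $\bar w$ in $S_4$ with $\tp^{S_4}(\bar w) = \tp^{\F}(\bar v)$.

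Next I would produce, from $\bar v$, a well-behaved finitely presented subgroup to feed into Proposition \ref{EmbeddingOrHyperbolicFloor}. Concretely, take $U = \langle \bar v \rangle$, the subgroup of $\F$ generated by the tuple: it is free, hence finitely presented, and after replacing $U$ by a free factor (and correspondingly shrinking/adjusting the argument) one may assume $U$ is freely indecomposable with respect to $\bar v$ — this is the standard reduction, using that a free group splits as a free product of cyclic groups and that the type of a tuple controls which free factor decompositions it is compatible with. Actually it is cleaner to first pass to a sub-tuple or to note that it suffices to prove the statement when $\langle \bar v\rangle$ is freely indecomposable relative to $\bar v$ (a cyclic free factor always gives a generic element, handled by Theorem \ref{MaxIndAreBases}); I will phrase the reduction so that $U := \langle \bar v \rangle$ is nontrivial, finitely presented, and freely indecomposable with respect to $\bar v$. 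Then apply Proposition \ref{EmbeddingOrHyperbolicFloor} with $G = \F$, $G' = S_4$, $\bar u = \bar v$: either there is an embedding $U \hookrightarrow S_4$ sending $\bar v$ to $\bar w$, or $U$ admits a hyperbolic floor structure over $\langle \bar v \rangle$. The second alternative is impossible by Lemma \ref{NoHypFloorInFreeGroup}, since $U$ is free. So we get an embedding $\iota : U \hookrightarrow S_4$ with $\iota(\bar v) = \bar w$.

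Now run the dichotomy the other way, with the roles of $\F$ and $S_4$ exchanged: apply Proposition \ref{EmbeddingOrHyperbolicFloor} with $G = S_4$, $G' = \F$, taking for the relevant subgroup $U' = \iota(U) \leq S_4$, which is finitely presented, contains $\bar w$, and is freely indecomposable with respect to $\bar w$ (being isomorphic to $U$ via $\iota$, which carries $\bar v$ to $\bar w$). Since $\tp^{S_4}(\bar w) = \tp^{\F}(\bar v)$, we conclude that either $U'$ embeds in $\F$ sending $\bar w$ to $\bar v$ — which just recovers an isomorphism $U' \cong U$ and gives no contradiction by itself — or $U'$ admits a hyperbolic floor structure over $\langle \bar w \rangle$; the latter is again excluded unless $U'$ is free, which it is, so again by Lemma \ref{NoHypFloorInFreeGroup} this cannot happen. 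This shows $\iota$ is not merely an embedding but in fact that $U$ and $U'=\iota(U)$ are ``the same'' free group abstractly — which is not yet a contradiction. The genuine contradiction must come from comparing this with the \emph{tower} structure: since $\tp^{S_4}(\bar w)$ is isolated and equals the generic-free data, one shows $\bar w$ must be ``generic enough'' in $S_4$ that $S_4$ would have to be a hyperbolic tower over $\langle \bar w\rangle$ (here is where isolation is really used, via Theorem \ref{ElementsOfPrimitiveTypeIFF} if $\bar v$ is a basis-type tuple, or more delicately via a relative version), and then Corollary \ref{TowerStructuresOfS4} forces $\langle \bar w \rangle$ to be cyclic or trivial, hence $U \cong \langle \bar v\rangle$ is cyclic or trivial, so $\bar v$ is (up to the type) a power of a single primitive element — and such a type, by Theorem \ref{MaxIndAreBases} and Theorem \ref{P0Isolated}, is \emph{not} isolated, the desired contradiction.

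The main obstacle I expect is the last paragraph: bridging from ``$\bar w$ embeds $U$ into $S_4$ and has the same type'' to ``$S_4$ is a hyperbolic tower over $\langle \bar w\rangle$''. A clean embedding does not by itself give a tower structure — Proposition \ref{EmbeddingOrHyperbolicFloor} only yields a floor, and only under a freely-indecomposable hypothesis — so one needs to iterate the embedding/floor dichotomy and use isolation of the type to rule out the ``perpetual embedding'' branch (an isolated type cannot be realized by a tuple that keeps embedding into ever-strictly-larger ambient structure without ever stabilizing into a tower, essentially because $S_4$ is finitely generated and the process must terminate). Getting this termination argument exactly right, and correctly handling the relative free decomposition reduction at the start, is where the real work lies; everything else is a formal application of the cited results.
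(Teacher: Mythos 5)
Your opening move is correct and matches the paper: isolation forces the type to be realized in $S_4$, and the idea of pitting Proposition~\ref{EmbeddingOrHyperbolicFloor} against the rigidity of $S_4$ (Lemma~\ref{FloorStructuresOfS4}/Corollary~\ref{TowerStructuresOfS4}) is the right one. But you apply the dichotomy in the wrong direction and, as you yourself observe, this leaves you without a contradiction and forces a vague appeal to ``tower structure over $\langle \bar w\rangle$'' that you do not (and cannot, along that route) justify.

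The missing idea is to apply Proposition~\ref{EmbeddingOrHyperbolicFloor} with $G = S_4$, $G' = \F$, and \emph{the whole group} $U = S_4$ as the finitely presented, freely indecomposable subgroup containing $\bar u$ (your $\bar w$). Then the embedding alternative would give an embedding $S_4 \hookrightarrow \F$, which is impossible since $S_4$ is a one-ended closed surface group and $\F$ is free. So the floor alternative is forced at the level of $S_4$ itself: $S_4$ is a hyperbolic floor over a subgroup $H \supseteq \langle \bar u\rangle$. Lemma~\ref{FloorStructuresOfS4} then pins $H$ down to a cyclic $\langle h\rangle$, and by Theorem~\ref{ElementsOfPrimitiveTypeIFF} the generator $h$ realizes $p_0$. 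In your version you instead fed $\langle\bar v\rangle \leq \F$ into the proposition; since this is free, Lemma~\ref{NoHypFloorInFreeGroup} always kills the floor alternative, you are left only with embeddings, and iterating the dichotomy on free subgroups never produces a floor structure on $S_4$ at all. That is precisely why your last paragraph stalls.

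There is a second gap in your concluding step. Knowing $H = \langle h\rangle$ is cyclic gives $\bar u = (h^{k_1},\dots,h^{k_n})$, and you would need to pass from ``$\tp(\bar u)$ is isolated'' to ``$p_0 = \tp(h)$ is isolated''. Your appeal to Theorem~\ref{MaxIndAreBases} plus Theorem~\ref{P0Isolated} does not supply this. The paper closes this step using uniqueness of roots in $S_4$: if $\theta(x)$ isolates $\tp(h^{k_1})$, then since $h$ is the unique $k_1$-th root of $h^{k_1}$, one has $S_4 \models \forall z\,(\theta(z^{k_1}) \to \psi(z))$ for every $\psi \in p_0$, so $\theta(z^{k_1})$ isolates $p_0$, contradicting Theorem~\ref{P0Isolated}. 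Without this root-extraction argument the step from powers of $h$ back to $p_0$ is not available.
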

\begin{proof}
Suppose $tp^{\F}(\bar{v})$ is isolated: then there exists a tuple $\bar{u}$ in $S_4$ such that $tp^{S_4}(\bar{u})=tp^{\F}(\bar{v})$. As 
$S_4$ does not embed in $\F$, Proposition \ref{EmbeddingOrHyperbolicFloor} applied to $U = S_4$ gives that $S_4$ is a hyperbolic floor 
over a subgroup $H$ containing $\langle \bar{u}\rangle$. 
Lemma \ref{FloorStructuresOfS4} implies that $H$ is a cyclic group, whose generator $h$ realizes $p_0$ by Theorem \ref{ElementsOfPrimitiveTypeIFF}. 
The tuple $\bar{u}$ is thus of the form $(h^{k_1}, \ldots, h^{k_n})$, and since its type is isolated, the type of $h^{k_1}$ is isolated: 
by a formula $\theta(x)$ say. Let $\psi(x)$ be a formula in $p_0$: by uniqueness of roots in $S_4$, the only $k_1$-th root of 
$h^{k_1}$ is $h$, so the formula 
$F(u) :\forall z \;(z^{k_1} = u \to \psi(z))$ is in the type of $h^{k_1}$. In particular
$$ S_4 \models \forall z \; (\theta(z^{k_1}) \to \psi(z)) $$
Thus $\theta(z^{k_1})$ isolates $p_0(z)$, which contradicts Theorem \ref{P0Isolated}.
\end{proof}

\section{Maximal independent sequences}\label{MaxIndepSeqSec}

The following result gives an example witnessing that $p_0$ has weight greater than $1$.

\begin{proposition}\label{Main} Let $S$ be the fundamental group of the orientable closed surface of characteristic $-2$, and let $G$ be the free product $\Z * S$.
 Then $G$ admits maximal independent sets of realizations of $p_0$ of cardinality $2$ and $3$.
\end{proposition}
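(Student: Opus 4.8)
The strategy is to exhibit two explicit generating sets of $G = \Z * S$ and show, using Theorem \ref{ElementsOfPrimitiveTypeIFF}, that one of them is a maximal independent set of realizations of $p_0$ of size $2$ while the other has size $3$. First I would fix a presentation $S = \langle a_1, b_1, a_2, b_2 \mid [a_1,b_1][a_2,b_2] = 1 \rangle$ and write $G = \langle t \rangle * S$. For the set of size $3$, the natural candidate is $\{t, a_1, a_2\}$ (or a similar triple whose members generate a free group of rank $3$): I would first check that $\langle t, a_1, a_2 \rangle$ is free of rank $3$, and then build an extended hyperbolic tower structure on $G$ over this subgroup. Concretely, there should be a retraction $r : G \to \langle t, a_1, a_2\rangle$ — for instance sending $b_1, b_2$ appropriately and $t, a_1, a_2$ to themselves — realizing $G$ as a hyperbolic floor with the surface $\Sigma$ as its single surface-type vertex and $\langle t, a_1, a_2\rangle$ (free, hence of the right form $H * F$) as the ground floor; one must verify the retraction sends $\pi_1(\Sigma)$ to a non-abelian image and that the Euler characteristic condition holds ($\chi(\Sigma) = -2$, fine). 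Then Theorem \ref{ElementsOfPrimitiveTypeIFF} gives that $(t, a_1, a_2)$ realizes $p_0^{(3)}$.

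For maximality of the triple, and for the existence of a size-$2$ maximal set, I would invoke Theorem \ref{ElementsOfPrimitiveTypeIFF} again in the form: a tuple realizes $p_0^{(k)}$ iff it freely generates a rank-$k$ subgroup over which $G$ is an extended hyperbolic tower. So maximality of $\{t, a_1, a_2\}$ amounts to showing $G$ admits \emph{no} extended hyperbolic tower structure over a free subgroup of rank $4$; since $G$ has a free factor $\Z$ and $S$ is freely indecomposable with $\chi(\Sigma_S)=-2$, a rank-$4$ free subgroup would force (via Grushko/Kurosh-type bookkeeping on the tower, exactly as in the proof of Corollary \ref{TowerStructuresOfS4} and Proposition \ref{P0InAllFGModels}) a hyperbolic floor structure on a surface piece that is too small — the relevant surfaces would end up being punctured tori or cylinders that either violate the $\chi \le -2$ condition or the retraction-to-non-abelian-image condition, as in the connected-sum-of-four-projective-planes analysis. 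For the size-$2$ set, the candidate is a pair like $\{t, a_1\}$: it generates $\F_2$, and $G$ is an extended hyperbolic tower over $\langle t, a_1\rangle$ (retract $S$ onto the subgroup generated by $a_1, b_1$ using the standard genus-reducing retraction of Proposition \ref{P0InAllFGModels}, keeping $t$ fixed, so the ground floor is $\langle t \rangle * \langle a_1, b_1\rangle \cong \F_3$, then the surface floor above it). So $(t,a_1)$ realizes $p_0^{(2)}$; the point is that it is \emph{maximal}, i.e. cannot be extended to a size-$3$ independent set of generics, which again is a statement that $G$ has no extended hyperbolic tower structure over the relevant rank-$3$ free groups containing $\langle t, a_1\rangle$ — here one uses that the only way to add a third generic element forces a tower structure incompatible with the surface $\Sigma$ once two of its "slots" are already used up.

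The main obstacle is the maximality assertions, not the realizability ones. Exhibiting tower structures is a routine construction; ruling them out requires a careful case analysis of all hyperbolic tower structures of $\Z * S$ over free subgroups of a given rank. The key tool is Lemma \ref{GOGSurfaceGroup} together with the freeness/indecomposability of the pieces: any graph-of-groups-with-surfaces decomposition of $G$ refines the free decomposition $\Z * S$, the surface-type vertices must come from $\pi_1$-injective subsurfaces of $\Sigma_S$ (plus possibly closed surface factors on the ground floor, of which there are none here since $S$ is already a single closed-surface piece only \emph{after} the tower is built up — so in fact the subsurfaces live inside $\Sigma_S$), and an Euler characteristic count $\chi(\Sigma) + \chi(\Sigma') = -2$ as in Lemma \ref{FloorStructuresOfS4} severely limits the possibilities. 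I would carry this out floor by floor, showing at each stage that the retraction constraints (non-abelian image of surface groups, or the cyclic/extended exception) combined with $\chi \le -2$ on non-punctured-torus surfaces leave no room to peel off more than two or three independent generic elements respectively, and in particular that $\{t,a_1\}$ and $\{t,a_1,a_2\}$ are genuinely maximal. Assembling these two cardinalities then proves the proposition.
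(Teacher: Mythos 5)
Your overall strategy matches the paper's: choose rank-$2$ and rank-$3$ free subgroups of $G$, exhibit extended hyperbolic tower structures over them via Theorem~\ref{ElementsOfPrimitiveTypeIFF}, and then rule out any refinement to a larger rank by constraining the possible subsurfaces via Lemma~\ref{GOGSurfaceGroup} and an Euler characteristic count. However, the two concrete candidates you pick do not work, and fixing them is precisely where the real content of the proof lies.

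The rank-$2$ candidate $\{t,a_1\}$ is not maximal, and your own construction shows why: you build the tower over $\langle t,a_1\rangle$ by passing through the ground floor $\langle t\rangle * \langle a_1, b_1\rangle \cong \F_3$. But then $(t,a_1,b_1)$ is a basis of a rank-$3$ free subgroup over which $G$ is a tower, so by Theorem~\ref{ElementsOfPrimitiveTypeIFF} it realizes $p_0^{(3)}$, and $\{t,a_1\}$ extends to a size-$3$ independent set of generics --- exactly what maximality forbids. In general, any pair that is a sub-basis of a rank-$3$ tower base will fail for this reason. The paper's choice is $H_2 = \langle a, za'z^{-1}\rangle$ (writing $z$ for your $t$), which arises from cutting the surface along \emph{two} curves to produce a four-times-punctured sphere, with $z$ and $b'z^{-1}$ appearing as Bass--Serre elements rather than elliptic generators. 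Crucially, no basis of an $\F_3$-factor of $G$ contains a basis of $H_2$: the $z$ is ``woven into'' the generator $za'z^{-1}$, which is what makes the pair genuinely maximal. Finding a pair with this property is the heart of the proposition.

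The rank-$3$ candidate and its tower structure also have a gap: you propose to take the closed surface $\Sigma$ itself as the surface-type vertex of the hyperbolic floor, but the definition requires surface-type vertices to carry compact surfaces with \emph{non-empty} boundary. The correct construction cuts $\Sigma$ along a single separating curve into two punctured tori, one of which (say $\langle a',b'\rangle$ with boundary $[a',b']$) is the surface piece, while the other punctured torus together with the $\Z$-factor gives the ground floor $H_1 = \langle a,b,z\rangle$. Your $\{t,a_1,a_2\}$ (i.e.\ $\{z,a,a'\}$) does not obviously arise from any such subsurface decomposition, and you would need a valid decomposition to invoke Theorem~\ref{ElementsOfPrimitiveTypeIFF}. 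Finally, for maximality your sketch has the right flavour, but the precise observation needed is this: if $G$ were a tower over some $K_i$ having $H_i$ as a free factor, then the boundaries of the punctured torus (resp.\ four-punctured sphere) of $\Lambda_i$ would be elliptic in the new decomposition $\Gamma_i$, so by Lemma~\ref{GOGSurfaceGroup} the surfaces of $\Gamma_i$ would be proper $\pi_1$-injected subsurfaces of a punctured torus or four-punctured sphere --- and the only such proper subsurfaces are thrice-punctured spheres and cylinders, neither of which is permitted in a hyperbolic floor.
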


\begin{proof}
We choose the following presentation for $G$:
$$\langle a, a', b, b',z \mid [a,b][a', b']=1 \rangle$$

The group $G$ admits at least three distinct hyperbolic tower structures.
\begin{enumerate}
\item {\bf The trivial structure:} $m = 0$ and $G = G^0 = \langle z \rangle * S$ is a free product of a free group and a closed surface group.

\item {\bf The structure over the subgroup} $H_1 = \langle a, b, z \mid  \rangle \simeq \F_3$: there is a hyperbolic floor $(G, H_1, r)$ described as follows.

The hyperbolic floor decomposition $\Lambda_1$ consists of 
\begin{itemize}
\item one vertex with vertex group $H_1$, 
\item one surface vertex with vertex group generated by $a', b'$ (the corresponding surface being a punctured torus).
\end{itemize}
The edge group is generated by $[a,b]$. The retraction $r: G \to H_1$ is given by $r(a)=a$, $r(b)=b$, $r(z)=z$, and $r(a')=b$, $r(b')=a$.

\item {\bf The structure over the subgroup} $H_2 = \langle a, za'z^{-1} \rangle \simeq \F_2$. There is a hyperbolic floor $(G, H_2, r)$ described as follows.

The hyperbolic floor decomposition $\Lambda_2$ consists of 
\begin{itemize}
\item one vertex with vertex group $H_2$, 
\item one surface vertex corresponding to a four times punctured sphere, whose maximal boundary subgroups are generated by $a$,$ba^{-1}b^{-1}$, $a'$, and $b'a'^{-1}b'^{-1}$ respectively.
\end{itemize}
The embeddings of the corresponding edge groups into $H_2$ send them on the subgroups generated by $a$, $a^{-1}$, $za'z^{-1}$, and $za'^{-1}z^{-1}$ respectively (so the Bass-Serre elements are $b$, $z^{-1}$, and $b'z^{-1}$).  

The retraction $r: G \to H_2$ is given by $r(a)=a$, $r(b)=1$, $r(a')=za'z^{-1}$, $r(b')=1$ and $r(z)=1$.
\end{enumerate}

\begin{figure}[!ht]
\begin{center} \scalebox{0.7}{
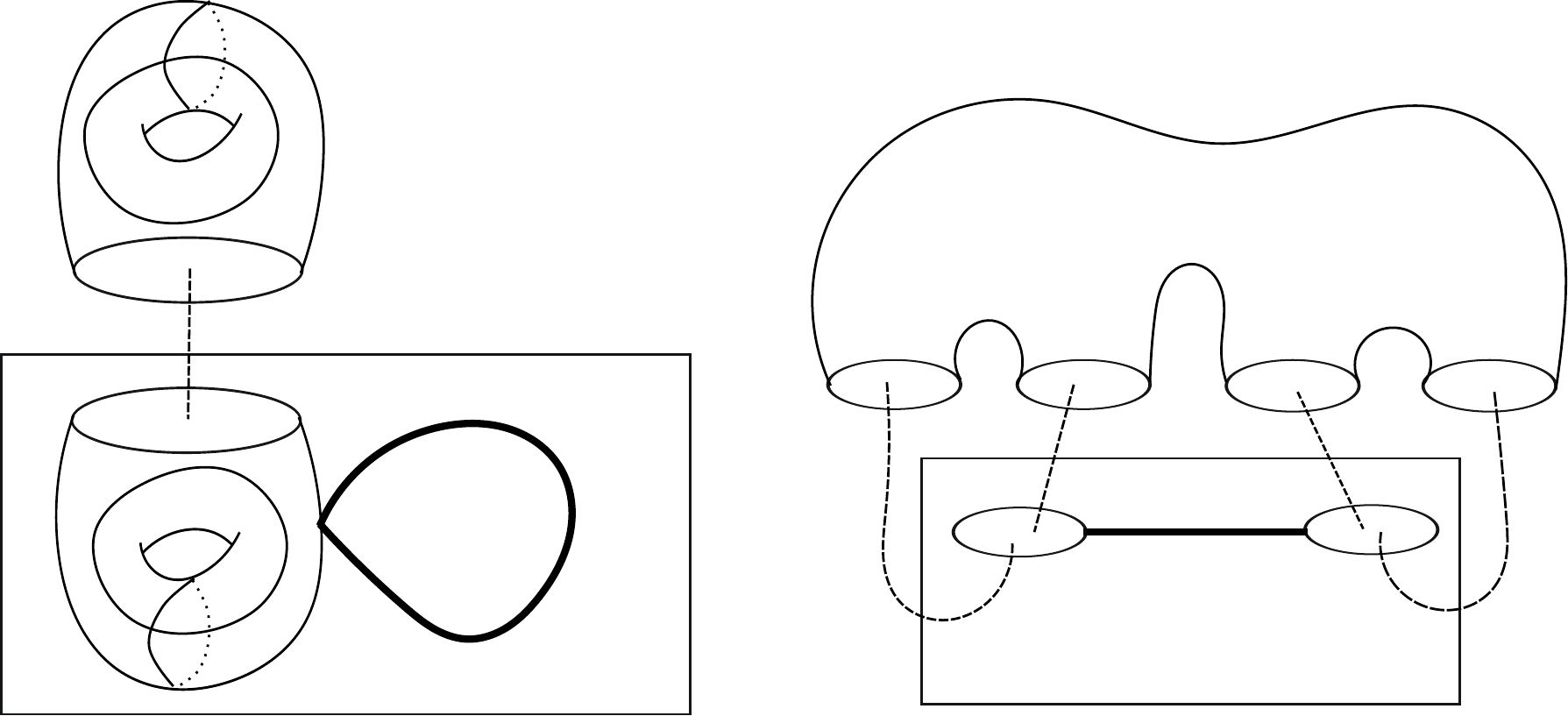}

\caption{The tower structures of $G$ over $H_1$ and $H_2$.}
\end{center}
\end{figure}

We claim that $G$ does not admit a tower structure over any rank $4$ subgroup $K_1$ in which $H_1$ is a free factor, nor over any rank $3$ subgroup $K_2$ in which $H_2$ is a free factor.

Indeed, suppose that such a subgroup $K_i$ exists for $i =1$ or $i=2$: in the associated graph of group decomposition $\Gamma_i$, the subgroup $H_i$ is elliptic, so the boundary subgroups of the surface group $S$ of $\Lambda_i$ are all elliptic. By Lemma \ref{GOGSurfaceGroup}, the induced graph of group decomposition for $S$ is dual to a set of non parallel simple closed curves on the punctured torus if $i=1$, or on the four times punctured sphere if $i=2$. In other words, the surfaces of $\Gamma_i$ are proper subsurfaces of the punctured torus or the four punctured sphere. But the only proper $\pi_1$-embedded subsurfaces that these surfaces admits are thrice punctured spheres or cylinders, and these are not permitted in tower structures.

Thus any basis for $H_1$ realizes $p_0^{(3)}$ in $G$, but cannot be extended to a realization of $p_0^{(4)}$, hence it is maximal. Similarly, any basis for $H_2$ is a maximal realization of $p^{(2)}_0$.
\end{proof}

\begin{remark} A similar proof would show that $S* \F_n$ admits maximal independent sets of realizations of $p_0$ of size both $n+1$ and $n+2$.
\end{remark}

In this low complexity case, it is easy to compute all the different possible tower structures that a given group may admit. In a more general setting, this becomes trickier. In particular, it would be interesting to find examples of finitely generated models of $T_{fg}$ (i.e. extended hyperbolic towers over the trivial subgroup) which admit maximal independent sets of realizations of $p_0$ of sizes whose ratio is arbitrarily large, thus witnessing directly infinite weight.

\section{Homogeneity and free products}\label{HomOfFreeProductsSec}

We mention one last application of the notion of hyperbolic towers: the free product of two homogeneous groups is not necessarily homogeneous.

For this we show:
\begin{lemma} \label{S2Homogeneous} Let $\Sigma$ denote the closed orientable surface of characteristic $-2$. 
The fundamental group $S$ of $\Sigma$ is homogeneous.
\end{lemma}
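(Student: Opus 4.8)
The plan is to prove homogeneity of $S = \pi_1(\Sigma)$ by a back-and-forth / type-transfer argument using the geometric machinery developed above. Recall that a group $G$ is homogeneous if whenever two tuples $\bar u$ and $\bar v$ of elements of $G$ have the same type, there is an automorphism of $G$ carrying $\bar u$ to $\bar v$. Since $S$ is a torsion-free hyperbolic group (the fundamental group of a closed hyperbolic surface), we have at our disposal Proposition \ref{EmbeddingOrHyperbolicFloor}: if $\bar u, \bar v$ are non-trivial tuples of $S$ with $\tp^S(\bar u) = \tp^S(\bar v)$, and $U \leq S$ is a finitely presented subgroup containing $\bar u$ and freely indecomposable with respect to it, then either $U$ embeds into $S$ sending $\bar u$ to $\bar v$, or $U$ is a hyperbolic floor over $\langle \bar u\rangle$.

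First I would reduce to the case of tuples that generate $S$. Given $\bar u$ with $\tp^S(\bar u) = \tp^S(\bar v)$: if $\langle \bar u\rangle$ is not all of $S$, one analyses the subgroup $U = \langle \bar u \rangle$ (which is free, hence finitely presented once finitely generated). If $U$ is freely indecomposable rel $\bar u$, apply Proposition \ref{EmbeddingOrHyperbolicFloor} with $U$: in the ``hyperbolic floor'' alternative, $S$ would be a hyperbolic floor, hence an extended hyperbolic tower over a proper subgroup $H \supseteq \langle\bar u\rangle$, and by Theorem \ref{SubtowersAreElementary} (assuming $H$ non-abelian) $H$ would be an elementary subgroup of $S$ — one then transfers the problem down to $H$, a free group of lower rank, where homogeneity of free groups (a known result of Perin--Sklinos, and in fact the main theorem of \cite{PerinSklinosHomogeneity}) can be invoked; the case where $H$ is cyclic is handled directly since $S_4$-type obstructions don't arise for the orientable surface of characteristic $-2$. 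In the ``embedding'' alternative, one gets an embedding $U \hookrightarrow S$ sending $\bar u$ to $\bar v$, and since $S$ is co-Hopfian among its finitely generated subgroups of the same rank (or by a direct surface-subgroup argument), this embedding extends to an automorphism. If $U$ is freely decomposable rel $\bar u$, one first splits off the free factors and argues componentwise.

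The key case is when $\langle \bar u \rangle = S$, i.e. $\bar u$ generates $S$. Then $S$ itself plays the role of $U$: it is finitely presented and freely indecomposable (being a closed surface group). Apply Proposition \ref{EmbeddingOrHyperbolicFloor}: either $S$ admits an embedding into itself carrying $\bar u$ to $\bar v$, or $S$ is a hyperbolic floor over $\langle \bar u\rangle = S$, which is absurd since a hyperbolic floor is a \emph{proper} subgroup decomposition (a floor retracts onto a proper subgroup). So we are always in the embedding case: there is a monomorphism $f: S \to S$ with $f(\bar u) = \bar v$. Now $f(S) = \langle \bar v\rangle$, and since $\tp(\bar v) = \tp(\bar u)$ and $\bar u$ generates $S$, the tuple $\bar v$ also generates $S$ (generation is type-definable here because $S$ is finitely presented and one can express in the type that $\bar u$ satisfies the surface relation and that the obvious generators of $S$ are words in $\bar u$). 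Hence $f$ is surjective, so $f \in \Aut(S)$, completing the back-and-forth step.

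I expect the main obstacle to be the bookkeeping in the non-generating case: controlling what happens when $\langle \bar u \rangle$ is a proper (free) subgroup, ensuring that the elementary subgroup $H$ furnished by the hyperbolic-floor alternative is genuinely simpler (lower rank free, or cyclic), and that homogeneity descends — this requires knowing that free groups are homogeneous and that the relevant hyperbolic floor structures of surface groups are classified, analogously to Lemma \ref{FloorStructuresOfS4} but for the orientable characteristic $-2$ surface rather than $\Sigma_4$. One must also verify that ``$\bar v$ generates $S$'' really is forced by the type, which uses that $S$ is finitely presented so that ``$\bar x$ generates a group isomorphic to $S$ via the standard presentation'' and ``the standard generators lie in $\langle \bar x\rangle$'' are expressible by first-order formulas in $\tp(\bar u)$; and that any injective self-map sending one generating tuple to another generating tuple is automatically an automorphism, which is immediate once surjectivity is known. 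The orientable characteristic $-2$ surface is precisely simple enough that the $S_4$-style exceptional obstruction of Lemma \ref{FloorStructuresOfS4} does not interfere, so no extra special-case analysis beyond the above should be needed.
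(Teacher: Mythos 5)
Your plan correctly identifies the main tool (Proposition \ref{EmbeddingOrHyperbolicFloor}) and the paper does indeed apply it with $U=S$ exactly as in your ``key case.'' However, there are genuine gaps in both your main case and your reduction.

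In the generating case, the step where you conclude surjectivity of $f$ is unjustified. You assert that $\bar v$ must generate $S$ because ``generation is type-definable,'' but no formula over the empty set expresses ``this tuple generates,'' and the informal argument you give (that the standard generators can be expressed as words in $\bar u$) is a statement with parameters, not something contained in $\tp(\bar u)$. In fact, proving that type-equivalence preserves the property of generating is morally equivalent to the homogeneity you are trying to prove. The correct argument here is simply co-Hopficity of closed hyperbolic surface groups: any injective endomorphism $f:S\to S$ is automatically an automorphism (finite-index proper subgroups have smaller Euler characteristic, infinite-index ones are free), no knowledge of what $\bar v$ generates is needed. The paper uses the relative co-Hopf property (Corollary 4.19 of \cite{PerinElementary}) in the analogous first case of its proof, applied to $j\circ i$ when embeddings exist both ways.

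Your reduction to the generating case is where the real trouble lies, and this is precisely where all the work is in the paper. Applying Proposition \ref{EmbeddingOrHyperbolicFloor} with $U=\langle\bar u\rangle$ is essentially vacuous: $\langle\bar u\rangle$ is a free group, hence by Lemma \ref{NoHypFloorInFreeGroup} it never has a floor structure, so you always land in the ``embedding'' alternative, which only gives an embedding $\langle\bar u\rangle\hookrightarrow\langle\bar v\rangle$ --- something that already follows trivially from $\tp(\bar u)=\tp(\bar v)$ and says nothing about extending to an automorphism of $S$. You must keep $U=S$ as the ambient group in the proposition even when $\bar u$ does not generate; then the floor alternative gives a floor structure of $S$ over a proper subgroup containing $\bar u$ (forced to be $\F_2$ or contained in an $\F_2$ by the Euler characteristic bookkeeping specific to $\chi=-2$). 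Crucially, the dichotomy ``embedding exists / floor exists'' is not symmetric in $\bar u,\bar v$: the paper's hardest case (its ``Case 2'') is when $j:S\to S$ with $j(\bar v)=\bar u$ exists but no $i$ with $i(\bar u)=\bar v$ does, and it requires a delicate complexity argument (via Lemmas 3.10 and 3.12 of \cite{PerinElementary}) showing that $j$ restricted to a piece of an induced splitting is an isomorphism of surfaces-with-boundary. Your proposal does not address this asymmetric case at all; ``transfer the problem to an elementary $H$ and invoke homogeneity of free groups'' does not help, because even if $\bar u\in H\prec S$, the tuple $\bar v$ need not be conjugable into $H$, and an automorphism of $H$ moving $\bar u$ to $\bar v$ would not extend to $S$ without the kind of careful boundary-respecting analysis that occupies the bulk of the paper's proof.
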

Note that Corollary 8.5 of \cite{PerinSklinosHomogeneity} states that the fundamental group of a surface of 
characteristic at most $-3$ is not homogeneous. 

\begin{proof} Suppose $\bar{u}$ and $\bar{v}$ are tuples which have the same type in $S$. 

Suppose first that there exist embeddings $i: S \hookrightarrow S$ and $j: S \hookrightarrow S$ such 
that $i(\bar{u}) = \bar{v}$ and $j(\bar{v})= \bar{u}$. Then since $S$ is freely indecomposable, 
relative co-Hopf property of torsion-free hyperbolic group (see Corollary 4.19 of \cite{PerinElementary}) 
implies that $j \circ i$ is an isomorphism, hence so are $i$ and $j$. 

We can now assume without loss of generality that there does not exist any embedding $i: S \hookrightarrow S$ such 
that $i(\bar{u}) = \bar{v}$. By Proposition \ref{EmbeddingOrHyperbolicFloor} applied to $U=S$, 
this implies that $S$ admits a structure of extended hyperbolic floor over a proper subgroup $U$ containing $\bar{u}$.

By Lemma \ref{GOGSurfaceGroup}, the hyperbolic floor decomposition $\Gamma$ is dual to a set ${\cal C}$ of 
disjoint simple closed curves on $\Sigma$. This decomposes $\Sigma$ into two (possibly disconnected) subsurfaces $\Sigma_0$ and $\Sigma_1$, 
corresponding respectively to non surface type vertices and surface type vertices. 
Now $\Sigma_0$ and $\Sigma_1$ satisfies $\chi(\Sigma_0)+ \chi(\Sigma_1) = -2$, and either $\Sigma_1$ is a punctured torus, or
it has characteristic at most $-2$.

In the first case, $\Sigma_0$ is also a punctured torus and $U = \pi_1(\Sigma_0)  \simeq \F_2$. In the second case, 
we get that $\chi(\Sigma_0)=0$ so $\Sigma_0$ must be a cylinder and $\bar{u}$ lies in $\pi_1(\Sigma_0)$. 
Let $\Sigma'_0$ be a punctured torus containing $\Sigma_0$: since $S$ admits a structure of hyperbolic 
floor over $\pi_1(\Sigma'_0)$ which contains $\bar{u}$, we may assume we are also in the first case. Thus $S$ admits the following presentation
$$\langle \alpha_0, \alpha_1, \beta_0, \beta_1 \mid [\alpha_0, \beta_0]=[\alpha_1, \beta_1] \rangle$$ 
with $U = \langle \alpha_0, \beta_0 \rangle$ and $\pi_1(\Sigma_1)= \langle \alpha_1, \beta_1 \rangle$ 

\paragraph{Case 1} If there does not exist any embedding $j: S \hookrightarrow S$ such that $j(\bar{v}) = \bar{u}$, 
we can deduce similarly that $S$ has a structure hyperbolic floor over a subgroup $V \simeq \F_2$ which contains $\bar{v}$, and that $S$ admits a presentation as $\langle \alpha'_0, \alpha'_1, \beta'_0, \beta'_1 \mid [\alpha'_0, \beta'_0]=[\alpha'_1, \beta'_1] \rangle$ with $V = \langle \alpha'_0, \beta'_0 \rangle$.

Note that if there exists an isomorphism $f: U \to V$ sending $\bar{u}$ to $\bar{v}$, we must have $f([\alpha_0, \beta_0]) = g[\alpha'_0, \beta'_0] g^{-1}$ for some $g$ in $V$ (in $\F_2$, all the commutators of two elements forming a basis are conjugate). Thus $f$ can be extended to an automorphism of $S$ by letting $f(\alpha_1) = g \alpha'_1 g^{-1}$ and $f(\beta_1) = g \beta'_1 g^{-1}$. We will now show that such an isomorphism $U \to V$ always exists.

If $U$ is freely indecomposable with respect to $\bar{u}$, by Lemma \ref{NoHypFloorInFreeGroup} and by Proposition \ref{EmbeddingOrHyperbolicFloor}, there is an embedding $f:U \hookrightarrow V$ sending $\bar{u}$ to $\bar{v}$. The smallest free factor of $V$ containing $\bar{v}$ contains $f(U)$, thus it cannot be cyclic. In particular, we have that $V$ is freely indecomposable with respect to $\bar{v}$. This implies in a similar way that there is an embedding $h:V \hookrightarrow U$ sending $\bar{v}$ to $\bar{u}$. Considering $h \circ f$ and using relative co-Hopf property for torsion-free hyperbolic groups shows $f$ is in fact an isomorphism, again proving the claim.

If $\bar{u}$ is contained in a cyclic free factor $\langle u_0 \rangle$ of $U$, then $\bar{v}$ is contained in a cyclic free factor $\langle v_0 \rangle$ of $V$. Then $\bar{u} = (u^{k_1}_0, \ldots, u^{k_l}_0)$, but since $\bar{u}$ and $\bar{v}$ have the same type, we have $\bar{v} = (v^{k_1}_0, \ldots, v^{k_l}_0)$. Thus we can easily find an isomorphism $f$ as required.

\paragraph{Case 2} Suppose now that there exists an embedding $j: S \hookrightarrow S$ such that $j(\bar{v}) = \bar{u}$.
 The hyperbolic floor decomposition $\Gamma$ of $S$ over $U$ (namely the amalgamated product $U *_{\langle c \rangle} S_1$) induces via $j$ a splitting of $S$ as a graph of groups with cyclic edge groups. By Lemma \ref{GOGSurfaceGroup}, this splitting is dual to a set ${\cal C}$ of simple closed curves on $\Sigma$.
 Since $\bar{u}$ is elliptic in the splitting $U *_{\langle c \rangle} S_1$, the tuple $\bar{v}$ is elliptic in this induced splitting. Thus $\bar{v}$ is contained in the fundamental group $S'_0$ of one of the connected components $\Sigma'_0$ of the complement in $\Sigma$ of ${\cal C}$, and $j(S'_0)$ is contained in $U$. 

We claim that $\Sigma'_0$ is a punctured torus, and that $j$ sends $S'_0$ isomorphically onto $U$ (as a surface group with boundary). This is enough to finish the proof, since we can then easily extend $j|_{S'_0}$ to an isomorphism $S \to S$. 

Let us thus prove the claim. The morphism $j$ is injective and sends elements corresponding to curves of ${\cal C}$ (in particular boundary subgroups of $S'_0$) to edge groups of $\Gamma$, that is, to conjugates of $\langle [\alpha_0, \beta_0] \rangle$. By Lemmas 3.10 and 3.12 of \cite{PerinElementary} we deduce that the complexity of $\Sigma'_0$ is at least that of $\Sigma_0$, and that if we have equality, then $j|_{S'_0}$ is an isomorphism of surface groups. In particular, if $\chi(\Sigma'_0) = -1$, $\Sigma'_0$ must have exactly one boundary component: hence it is a punctured tori, and the claim is proved. If $\chi(\Sigma'_0) = -2$, the surface $\Sigma'_0$ is a twice punctured tori. This implies that $S$ is generated by $S'_0$ together with an element $t$ which conjugates two maximal boundary subgroups $\langle d_1 \rangle$ and $\langle d_2 \rangle$ of $S'_0$ which are not conjugate in $S'_0$. Now $j(d_1)$ and $j(d_2)$ are conjugate in $S$, and both contained in $U$: they must be conjugate by an element $t'
$ of $U$ since $U$ is a retract of $S$. Now $j(t)^{-1}t'$ commutes to $j(d_1)$, so $j(t)^{-1}t'$, and thus $j(t)$, is contained in $U$. Finally $j(S) = \langle j(S'_0), j(t) \rangle \leq U$, but this is a contradiction since $U$ is free and $j$ is injective. 
\end{proof}

On the other hand, the following result is an immediate consequence of Proposition \ref{Main}:
\begin{lemma}
Let $G=\mathbb{Z}*S$. Then $G$ is not homogeneous.
\end{lemma}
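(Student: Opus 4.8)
The plan is to exploit the fact, established in Proposition~\ref{Main}, that $G = \Z * S$ admits two maximal independent sets of realizations of $p_0$ of different cardinalities, namely a basis $\bar u = (u_1, u_2, u_3)$ of $H_1 \simeq \F_3$ realizing $p_0^{(3)}$ and maximal for this property, and a basis $\bar v = (v_1, v_2)$ of $H_2 \simeq \F_2$ realizing $p_0^{(2)}$ and maximal for \emph{this} property. The key observation linking this to homogeneity is that homogeneity of a group $K$ means: whenever two tuples $\bar a, \bar b$ of the same length satisfy $\tp^K(\bar a) = \tp^K(\bar b)$, there is an automorphism of $K$ carrying $\bar a$ to $\bar b$. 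So I must produce two tuples in $G$ which have the same type but are not in the same automorphism orbit.

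First I would take the pair $\bar v = (v_1, v_2)$, a maximal independent realization of $p_0^{(2)}$ in $G$. By definition $\tp^G(v_1, v_2) = p_0^{(2)}$, which is exactly $\tp^G(e_1, e_2)$ for any basis $(e_1, e_2)$ of a non-abelian free group, by the discussion preceding the definition of $p_0^{(\kappa)}$ (connectedness gives stationarity of $p_0$, and any independent set of realizations of a stationary type has a determined type). Now inside $H_1 \simeq \F_3 = \langle u_1, u_2, u_3\rangle$ pick two of the basis elements, say $(u_1, u_2)$: since $H_1$ is a non-abelian free group, $(u_1, u_2)$ is an independent pair of realizations of $p_0$ \emph{in} $H_1$, but more to the point I want their type computed \emph{in $G$}. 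Here I would use Theorem~\ref{ElementsOfPrimitiveTypeIFF}: $(u_1, u_2)$ realizes $p_0^{(2)}$ in $G$ iff $\langle u_1, u_2\rangle$ is free of rank $2$ and $G$ is an extended hyperbolic tower over it. The first condition holds since $\langle u_1, u_2\rangle$ is a free factor of $H_1$; for the second, $G$ is a hyperbolic tower over $H_1$ (structure~2 in the proof of Proposition~\ref{Main}) and $H_1$ is a hyperbolic tower over its free factor $\langle u_1, u_2\rangle$ (since $\F_3$ is a hyperbolic tower over $\F_2$ in the trivial way — a free factor of a free group), and towers compose, so $G$ is a tower over $\langle u_1, u_2 \rangle$. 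Hence $\tp^G(u_1, u_2) = p_0^{(2)} = \tp^G(v_1, v_2)$.

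Now I claim there is no automorphism $\phi$ of $G$ with $\phi(u_1, u_2) = (v_1, v_2)$. Suppose there were. The pair $(u_1, u_2)$ \emph{can} be extended to a realization of $p_0^{(3)}$ in $G$, namely by adjoining $u_3$. Applying $\phi$, the pair $(v_1, v_2)$ could then be extended to a realization $(v_1, v_2, \phi(u_3))$ of $p_0^{(3)}$ in $G$ — because an automorphism preserves types, and being an independent triple of realizations of $p_0$ is a type-theoretic (indeed, by Theorem~\ref{ElementsOfPrimitiveTypeIFF}, an algebraic-geometric) condition invariant under $\Aut(G)$. But in the proof of Proposition~\ref{Main} it is shown precisely that any basis of $H_2$ — and in particular $(v_1, v_2)$ — is a \emph{maximal} independent set of realizations of $p_0$ in $G$, i.e.\ cannot be extended to a realization of $p_0^{(3)}$. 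This contradiction shows no such $\phi$ exists, so $G$ is not homogeneous.

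The main obstacle, and the point requiring the most care, is the claim that the two tuples $(u_1,u_2)$ and $(v_1,v_2)$ genuinely have the same type in $G$ rather than merely each realizing $p_0^{(2)}$ in some internal free subgroup — that is, one must be careful to invoke Theorem~\ref{ElementsOfPrimitiveTypeIFF} for the ambient group $G$ in both cases and to verify the tower-composition argument for $(u_1,u_2)$. One must also double-check that the ``maximality'' established in Proposition~\ref{Main} is exactly the statement that no $p_0^{(3)}$-realization extends $(v_1, v_2)$ in $G$ (not just: $H_2$ is not contained in a rank-$3$ free factor in a tower), but the proof there does establish the stronger formulation via Theorem~\ref{ElementsOfPrimitiveTypeIFF} applied to $G$, so this is available. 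Everything else is a short deduction.
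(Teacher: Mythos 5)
Your proof is correct and is essentially the paper's argument: exhibit a short maximal realization of $p_0^{(2)}$ and a longer realization of $p_0^{(3)}$, observe the first two coordinates of the latter share a type with the former, and derive a contradiction to maximality from a putative automorphism. The only difference is that you go through Theorem~\ref{ElementsOfPrimitiveTypeIFF} and a tower-composition argument to show $(u_1,u_2)$ realizes $p_0^{(2)}$, whereas this follows immediately from basic stability theory (a subtuple of an independent set of realizations of a stationary type is again an independent set of realizations), which is what the paper implicitly uses.
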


Since $\Z$ is homogeneous, this gives an example of a free product of two homogeneous groups which fails to be homogeneous. 

\begin{proof} By Proposition \ref{Main}, there exists maximal realizations $(u_1, u_2)$ of $p^{(2)}_0$ and $(v_1, v_2, v_3)$ of $p^{(3)}_0$ in $G$. If $G$ was homogeneous, there would be an automorphism $\theta$ of $G$ sending $(v_1, v_2)$ to $(u_1, u_2)$ since they both realize $p^{(2)}_0$. But then $(u_1, u_2, \theta(v_3))$ would realize $p_0^{(3)}$, contradicting maximality of $(u_1, u_2)$. 
\end{proof}

\providecommand{\bysame}{\leavevmode\hbox to3em{\hrulefill}\thinspace}
\providecommand{\MR}{\relax\ifhmode\unskip\space\fi MR }
\providecommand{\MRhref}[2]{%
  \href{http://www.ams.org/mathscinet-getitem?mr=#1}{#2}
}
\providecommand{\href}[2]{#2}

\end{document}